\documentclass[11pt]{amsart}
\pdfoutput=1
\usepackage{amsmath, amssymb, amsthm, graphicx, subfig, mathrsfs, calc, xspace,
  mathtools, ifthen, booktabs, longtable, comment, setspace, verbatim,
  placeins, abraces}
\usepackage[colorlinks=true, urlcolor=blue, citecolor=red]{hyperref}
\newlength{\figWidthA}
\newlength{\figWidthB}

\newcommand{\xth}[1][th]{\textsuperscript{#1}\xspace}
\newcommand{\set}[2]{\ifthenelse{\equal{#2}{}}{\{#1\}}{\{#1 \mid #2\}}}
\newcommand{\ol}{\overline}
\newcommand{\s}{\sigma}
\newcommand{\nats}{\mathbb{N}}

\newcommand{\G}{\mathscr{G}}
\newcommand{\V}[1][G]{V(#1)}
\newcommand{\E}[1][G]{E(#1)}
\newcommand{\N}[2][]{\ifthenelse{\equal{#1}{}}{N(#2)}{N_{#1}(#2)}}
\renewcommand{\deg}[2][]{\ifthenelse{\equal{#1}{}}{d(#2)}{d_{#1}(#2)}}
\newcommand{\size}[1]{\##1}

\newcommand{\gamename}[1]{\ifthenelse{\equal{\showinggamename}{false}}%
  {}{^{#1}}}
\newcommand{\showgame}{\def\showinggamename{true}}
\newcommand{\hidegame}{\def\showinggamename{false}}

\hidegame

\newcommand{\pos}[2][\s]{#1_{#2}}
\newcommand{\chips}[3][\s]{#1_{#3}(#2)}
\newcommand{\receiving}[3][\s]{\Phi\gamename{#1}_{#3}(#2)}
\newcommand{\firing}[3][\s]{F\gamename{#1}_{#3}(#2)}
\newcommand{\pfp}[2][\s]{F\gamename{#1}(#2)}
\newcommand{\period}[1][\s]{T\gamename{#1}}
\newcommand{\pfpverts}[2][\s]{\tau\gamename{#1}(#2)}
\newcommand{\pfpedges}[3][\s]{\pi\gamename{#1}(#2,#3)}
\newcommand{\misb}[4][\s]{m\gamename{#1}_{#2}(#3,#4)}
\newcommand{\sctr}[1]{\mathcal{X}(#1)}
\newcommand{\mots}[1][\s]{M\gamename{#1}}
\newcommand{\pfps}[1][\s]{P\gamename{#1}}
\newcommand{\cpfps}[1][\s]{Q\gamename{#1}}

\newtheorem{thm}{Theorem}[section]
\newtheorem{lem}[thm]{Lemma}
\newtheorem{cor}[thm]{Corollary}

\numberwithin{equation}{section}

\begin{document}

\setlength\figWidthA{.72\textwidth}
\setlength\figWidthB{.28\textwidth}

\title[Motors and Impossible Firing Patterns in the PCFG]{Motors and Impossible
  Firing Patterns in the\\Parallel Chip-Firing Game}
\author{Tian-Yi Jiang, Ziv Scully, Yan X Zhang}
\address{Massachusetts Institute of Technology}
\email{jiangty@mit.edu\textrm{, }ziv@mit.edu\textrm{, }yanzhang@post.harvard.edu}
\begin{abstract}
The \emph{parallel chip-firing game} is an automaton on graphs in which
vertices ``fire'' chips to their neighbors. This simple model contains much
emergent complexity and has many connections to different areas of
mathematics. In this work, we study \emph{firing sequences}, which describe
each vertex's interaction with its neighbors in this game. First, we introduce
the concepts of \emph{motors} and \emph{motorized games}. Motors both
generalize the game and allow us to isolate local behavior of the (ordinary)
game. We study the effects of motors connected to a tree and show that
motorized games can be transformed into ordinary games if the motor's firing
sequence occurs in some ordinary game. Then, we completely characterize the
periodic firing sequences that can occur in an ordinary game, which have a
surprisingly simple combinatorial description.

\end{abstract}
\maketitle

\section{Introduction}

\subsection*{Background}
The \emph{parallel chip-firing game}, also known as the \emph{discrete
  fixed-energy sandpile model}, is an automaton on graphs in which vertices
that have at least as many chips as incident edges ``fire'' chips to their
neighbors. In graph theory, it has been studied in relation with the critical
group of graphs~\cite{biggs}. In computer science, it is able to simulate any
two-register machine and is thus universal~\cite{universality}. As a specific
case of the \emph{abelian sandpile model}, which is itself a generalization of
a sandpile model introduced by Bak, Tang, and Weisenfeld~\cite{BTWcriticality,
  BTWmoreCriticality} in the study of self-organized criticality, it has even
more links with other fields.

\subsection*{The Game}
The parallel chip-firing game is played on a graph as follows:
\begin{itemize}
\item At first, a nonnegative integer number of chips is placed on each vertex
  of the graph.
\item The game then proceeds in discrete turns. Each turn, a vertex checks to
  see if it has at least as many chips as incident edges.
\begin{itemize}
\item If so, that vertex \emph{fires}.
\item Otherwise, that vertex \emph{waits}.
\end{itemize}
\item To fire, a vertex passes one chip along each of its edges. All vertices
  that fire in a particular turn do so in parallel.
\item Immediately after firing or waiting, every vertex receives any chips that
  were fired to it.
\end{itemize}
Here we will only consider games on finite, undirected, connected graphs,
though the definition of the game can be easily generalized for arbitrary
multidigraphs. An example game is illustrated in Figure~\ref{example}. Given a
parallel chip-firing game $\s$, we refer to the chip configuration, also called
the position, at a particular time $t \in \nats$ as $\pos{t}$.

\begin{centering}
\begin{figure}[h]
  \subfloat{\includegraphics[width=\figWidthA]{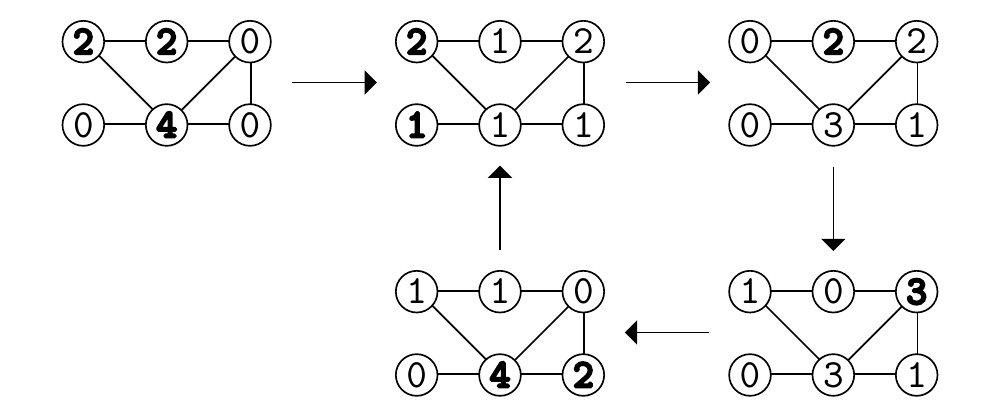}}
  \subfloat{\includegraphics[width=\figWidthB]{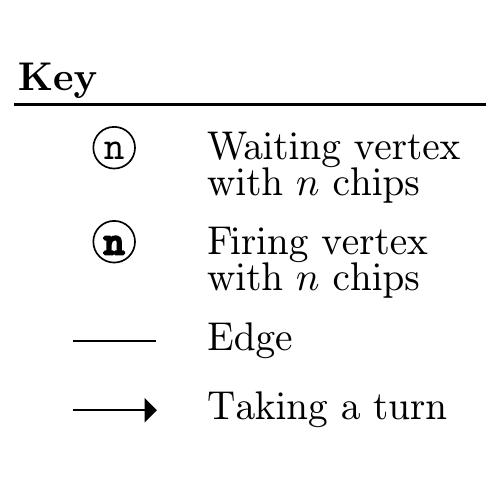}}
  \caption{A parallel chip-firing game. From an initial position in the upper
    left, the game eventually enters a period of length 4.}
  \label{example}
\end{figure}
\end{centering}

The total number of chips on all vertices of the graph is constant throughout a
game, so there are finitely many possible positions in every game. Therefore,
every game eventually reaches a position $\pos{t}$ that is identical to a later
position $\pos{t+p}$ for some $t,p \in \nats$ with $p > 0$. (We write $\pos{t}
= \pos{t+p}$.) The game is deterministic, so $\pos{t+n} = \pos{t+n+p}$ for all
$n \in \nats$. Thus, every parallel chip-firing game is eventually periodic.

In this paper, we concern ourselves with both \emph{firing sequences} and
\emph{periodic firing patterns} of vertices. Each is a binary string
representing whether or not a particular vertex fires or waits each turn. The
sequence covers all times from 0 to infinity, while the periodic pattern covers
just one period.

\subsection*{Previous Work}
The periodicity of the parallel chip-firing game gives rise to two
questions. First, what characteristics of a game and its underlying graph
determine the length of a period? It is known exactly what periods are possible
on certain classes of graphs, such as trees~\cite{bitarGoles}, simple
cycles~\cite{cycle}, the complete graph~\cite{levine}, and the complete
bipartite graph~\cite{jiang}. For these graphs, the maximum period lengths are
bounded by the number of vertices, but Kiwi et al.~\cite{kiwiEtAl} constructed
graphs on which the period of games can grow exponentially with polynomial
increase in the number of vertices. There are also results regarding the total
number of chips in a game. Kominers and Kominers~\cite{kominers} showed that
games with a sufficiently large density of chips must have
period~1. Dall'Asta~\cite{cycle} and Levine~\cite{levine}, in their respective
characterizations of periods on cycles and complete graphs, related the total
number of chips to a game's \emph{activity}, the fraction of turns during which
a vertex fires. The denominator of the activity must divide the period.

Second, we notice that some but not all positions $\pos{t}$ are
\emph{periodic}, satisfying $\pos{t} = \pos{t+p}$ for some positive $p \in
\nats$. What characterizes periodic positions? This problem has not been as
extensively studied. Dall'Asta~\cite{cycle} characterized the periodic
positions of games on cycles.

\subsection*{Our Results}
We hope to advance the understanding of both of these questions through the
study of firing sequences and periodic firing patterns.

After precisely defining the parallel chip-firing game in Section~\ref{pres},
the first half of the paper develops a new tool for studying the chip-firing
game: \emph{motors}, vertices that fire with a regular pattern independent of
normal chip-firing rules. Games with motors are called \emph{motorized
  games}. Motors allow us to study the behavior of subgraphs in ordinary
parallel chip-firing games. In Section~\ref{motors} we show that vertices
always ``follow'' a motor in periodic motorized games on trees. In
Section~\ref{simulatingMotors}, we prove that periodic motorized games can be
transformed into ordinary games as long as the firing sequence of each motor
occurs in an ordinary game.

The second half of the paper characterizes the possible periodic firing
patterns in parallel chip-firing games. Section~\ref{binSeq} briefly steps away
from the game to study certain signed sums of periodic binary sequences. The
result is an inequality applicable to edges of the graph of a parallel
chip-firing game. In Section~\ref{nonclumpiness}, we sum this inequality over
all relevant edges to show that periodic firing patterns with both consecutive
0s and consecutive 1s cannot occur in a parallel chip-firing game. This, along
with an already known construction, fully characterizes the periodic firing
patterns possible in parallel chip-firing games. Finally, in
Section~\ref{corollaries}, we examine some implications of this theorem.

\section{Preliminaries} \label{pres}

\subsection*{Definitions}
A \emph{parallel chip-firing game} $\s$ on a graph $G = (\V, \E)$ is a sequence
$(\pos{t})_{t \in \nats}$ of ordered tuples with natural number elements
indexed by $\V$. Each tuple represents the chip configuration at a particular
turn, where each element of the tuple is the number of chips on the
corresponding vertex. We define the following for all $v \in \V$:
\begin{align*}
  \N{v} &= \set{w \in \V}{\{v,w\} \in \E} \\
  \deg{v} &= \size{\N{v}} \\
  \chips{v}{t} &= \text{number of chips on $v$ in position $\pos{t}$} \\
  \firing{v}{t} &= \begin{cases}
    0 &\text{ if } \chips{v}{t} \leq \deg[]{v} - 1 \\
    1 &\text{ if } \chips{v}{t} \geq \deg[]{v}
  \end{cases} \\
  \receiving{v}{t} &= \sum_{\mathclap{w \in \N{v}}} \firing{w}{t}.
\end{align*}
In a parallel chip-firing game, $\pos{t}$ induces $\pos{t+1}$. For all $v \in
\V$,
\begin{equation}\label{gameDef}
  \chips{v}{t+1} = \chips{v}{t} + \receiving{v}{t} - \firing{v}{t}\deg[]{v},
\end{equation}
so an initial position suffices to define a game on a given graph. When
$\firing{v}{t} = 0$, we say $v$ \emph{waits} at $t$, and when $\firing{v}{t} =
1$, we say $v$ \emph{fires} at $t$.

A position $\pos{t}$ is \emph{periodic} if and only if there exists $p \in \nats$ such
that $\pos{t} = \pos{t+p}$. The minimum such $p$ for which this occurs is the
\emph{period} of $\s$ and is denoted $\period$. Abusing notation slightly, ``a
period'' of a game $\s$ may also refer to a set of times $\{t, t+1, \dots,
t+\period-1\}$, where $\pos{t}$ is periodic. The parallel chip-firing game is
deterministic and there are finitely many possible positions on a given graph
with a given number of chips, so for any game $\s$, there exists $t_0 \in
\nats$ such that $\pos{t}$ is periodic for all $t \geq t_0$. If the initial
position of a game is periodic, we may also call the game itself periodic.

\subsection*{Notation}
\newlength{\tablespace}
\setlength{\tablespace}{.3\baselineskip}
Definitions for invented notation are given in the section indicated in the
last column.\\

\showgame
\begin{centering}
  \begin{tabular}{l p{.65\textwidth} l}
    \toprule
    \multicolumn{2}{l}{\emph{Parallel Chip-Firing}} & \emph{Defined in} \\
    \midrule

    $\chips{v}{t}$ & Number of chips on vertex $v$ in position $\pos{t}$. &
    Section~\ref{pres} \vspace{\tablespace}\\

    $\pfp{v}$ & Periodic firing pattern of $v$. & Section~\ref{nonclumpiness}
    \vspace{\tablespace}\\

    $\firing{v}{t}$ & Indicates whether or not vertex $v$ fires in $\pos{t}$. &
    Section~\ref{pres} \vspace{\tablespace}\\

    $\receiving{v}{t}$ & Number of chips vertex $v$ will receive in $\pos{t}$.
    & Section~\ref{pres} \vspace{\tablespace}\\

    $\period$ & Period of $\s$. & Section~\ref{pres} \vspace{\tablespace}\\

    $\mots$ & Set of vertices that are motors in $\s$. & Section~\ref{motors}
    \vspace{\tablespace}\\

    \toprule
    \multicolumn{3}{l}{\emph{Graphs}} \\
    \midrule

    $\V$ & \multicolumn{2}{l}{Vertex set of graph $G$} \vspace{\tablespace}\\

    $\E$ & \multicolumn{2}{l}{Edge set of graph $G$} \vspace{\tablespace}\\

    $\N[G]{v}$ & \multicolumn{2}{l}{Neighbors of vertex $v$.}
    \vspace{\tablespace}\\

    $\deg[G]{v}$ & \multicolumn{2}{p{.815\textwidth}}{The degree of vertex $v$
      in graph $G$.} \vspace{\tablespace}\\

    \toprule
    \multicolumn{3}{l}{\emph{Other}} \\
    \midrule

    $[a,b]$ & \multicolumn{2}{l}{The integer interval $\{a, a+1, \dots, b\}$.}
    \vspace{2\tablespace}
  \end{tabular}\\
\end{centering}
\hidegame

We leave out the subscript $G$ or superscript $\s$ if there is no ambiguity.

\section{Motors}\label{motors}

Let $G$ be a graph. Suppose we wish to study the periodic behavior of games on
$G$, focusing on a particular subgraph $H \subseteq G$. Consider
\begin{equation*}
  X = \set{v \in \V \setminus \V[H]}{\N{v} \cap \V[H] \neq \emptyset},
\end{equation*}
the boundary of $H$. Knowing the initial chip configuration on $\V[H] \cup X$
is in general not enough to determine all subsequent configurations because
vertices in $X$ may have interactions with vertices outside of $\V[H] \cup
X$. However, we do know that every vertex assumes a pattern of firing and
waiting that repeats periodically as soon as a game reaches a periodic
position. Therefore, we can simulate the presence of the rest of $G$ by having
each vertex in $X$ fire with a regular pattern regardless of the number of
chips it receives.

The $\emph{firing sequence}$ of a vertex $v$ in game $\s$ is the sequence
$(\firing{v}{t})_{t \in \nats}$. A \emph{motorized parallel chip-firing game},
or simply ``motorized game'', on $G$ is a game $\s$ obeying \eqref{gameDef}
with a non-empty set of \emph{motors} $\mots \subseteq \V$. Each motor follows
a predetermined firing sequence, firing without regard for the normal rules of
the parallel chip-firing game, which means, for example, that a motor may have
a negative number of chips. Put another way, for each $m \in \mots$,
$\firing{m}{t}$ does not depend on $\chips{m}{t}$. The term ``ordinary game''
refers to a game with no motors when there is ambiguity. A motorized game is
shown in Figure~\ref{motorizedTreeNoGlider}.

\begin{centering}
\begin{figure}[tbh]
  \subfloat{\includegraphics[width=\figWidthA]{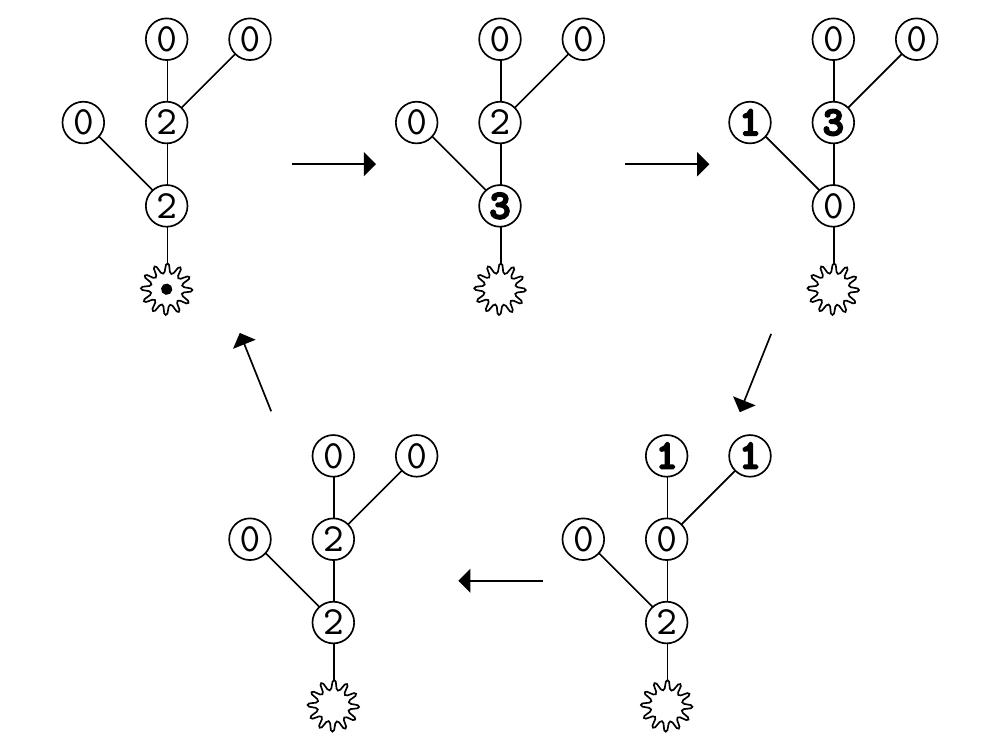}}
  \subfloat{\includegraphics[width=\figWidthB]{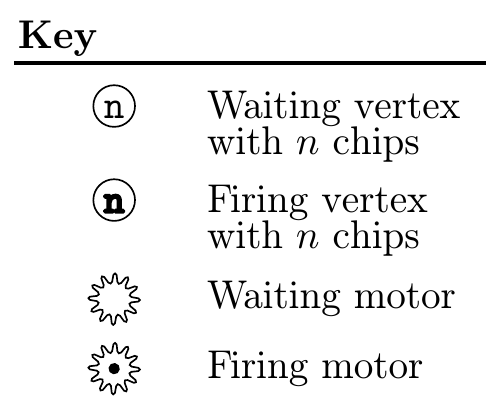}}
  \caption{A motorized parallel chip-firing game. The motor has firing sequence
    $(1,0,0,0,0,1,0,0,0,0,\dots)$.}
  \label{motorizedTreeNoGlider}
\end{figure}
\end{centering}

If a motorized game $\s$ is eventually periodic (which is the case if every
motor's firing sequence is eventually periodic), then just as in an ordinary
game, every vertex fires the same number of times each period.  The proof is
identical to the proof of this fact for ordinary games~\cite{jiang}: all
neighbors of the vertex that fires the most times each period must also fire
that maximal number of times, and by induction, so do all vertices. (Recall
that we consider in this paper only connected graphs.)

Let $f \in \{0,1\}$. Call an interval $[a, b]$ with $a < b$ an \emph{$f$-clump}
of $v \in \V$ if and only if $\firing{v}{t} = f$ for all $t \in [a, b]$. We
call $[a, b]$ an \emph{$f$-max-clump} if, in addition, $\firing{v}{a-1} =
\firing{v}{b+1} = 1-f$. Given $v \in \V$, we can express $\nats$ as the union
of max-clumps of $v$ and times during which $v$ alternates between firing and
waiting.

The proof of Theorem~\ref{cheapLunch} follows the same structure as the proof
that ordinary games on trees have period~1 or~2~\cite{bitarGoles}. In fact, we
rely on a lemma originally introduced for that proof.

\begin{lem}[{\cite[Lemma 1]{bitarGoles}}] \label{bitarGoles}
Let $\s$ be a game on $G$. For all $v \in \V$ and $f \in \{0,1\}$, if $[a, b]$
is an $f$-clump of $v$, then there exists a neighbor $w \in \N{v}$ such that
$[a-1, b-1]$ is an $f$-clump of $w$.
\end{lem}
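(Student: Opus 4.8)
The plan is to argue by contradiction, reading a numerical inequality off the conservation law \eqref{gameDef} for $v$ and playing it against the amount of firing done by $v$'s neighbors near the interval $[a-1,b-1]$. So suppose $[a,b]\subseteq\pat{v}{f}$ but, contrary to the claim, $[a-1,b-1]\not\subseteq\pat{w}{f}$ for every neighbor $w\in\N{v}$; that is, each such $w$ takes the value $1-f$ at some time in $[a-1,b-1]$. The identity I want comes from rewriting \eqref{gameDef} as $\receiving{v}{t}=\chips{v}{t+1}-\chips{v}{t}+\firing{v}{t}\deg[]{v}$ and summing over $t\in[a-1,b-1]$, which telescopes to
\[
  \sum_{t=a-1}^{b-1}\receiving{v}{t}=\chips{v}{b}-\chips{v}{a-1}+\deg[]{v}\sum_{t=a-1}^{b-1}\firing{v}{t}.
\]
Since $v$ holds value $f$ on all of $[a,b]$, the $v$-firing sum on the right equals $\firing{v}{a-1}$ if $f=0$ and $(b-a)+\firing{v}{a-1}$ if $f=1$, while $\chips{v}{b}\le\deg[]{v}-1$ if $f=0$ and $\chips{v}{b}\ge\deg[]{v}$ if $f=1$; meanwhile the left side equals $\sum_{w\in\N{v}}\bigl(\sum_{t=a-1}^{b-1}\firing{w}{t}\bigr)$.

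The case $f=0$ then closes at once. Whether $v$ waits or fires at $a-1$ one has $\chips{v}{a-1}-\deg[]{v}\firing{v}{a-1}\ge 0$ (use $\chips{v}{a-1}\ge 0$ in the first case and the firing threshold $\chips{v}{a-1}\ge\deg[]{v}$ in the second), so the displayed sum is at most $\chips{v}{b}\le\deg[]{v}-1<\size{\N{v}}$; hence some neighbor $w$ fires zero times during $[a-1,b-1]$, i.e.\ $[a-1,b-1]\subseteq\pat{w}{0}$, contradicting the hypothesis. For $f=1$ the needed inequality points the other way: I want $\sum_{t=a-1}^{b-1}\receiving{v}{t}\ge\deg[]{v}(b-a)+1$, since each of the $\deg[]{v}$ neighbors waits somewhere among the $b-a+1$ turns of $[a-1,b-1]$ and so fires there at most $b-a$ times, forcing the left side to be $\le\deg[]{v}(b-a)$. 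The snag is exactly the boundary turn $a-1$: if $v$ itself fires at $a-1$ with a large chip surplus, the telescoped identity alone does not deliver this lower bound. I expect this endpoint bookkeeping, rather than the one-line telescoping, to be the only genuine obstacle.

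My fix is to slide the left endpoint leftward instead of fixing it at $a$. Let $a^{*}\le a$ be minimal with $[a^{*},b]\subseteq\pat{v}{1}$. If $a^{*}\ge 1$ then $v$ waits at $a^{*}-1$, so $\chips{v}{a^{*}-1}\le\deg[]{v}-1$ and $\firing{v}{a^{*}-1}=0$, and the identity applied over $[a^{*}-1,b-1]$ gives $\sum_{t=a^{*}-1}^{b-1}\receiving{v}{t}\ge\deg[]{v}(b-a^{*})+1$; as $[a^{*}-1,b-1]$ has $b-a^{*}+1$ turns, some neighbor $w$ must fire at every one of them, whence $[a-1,b-1]\subseteq[a^{*}-1,b-1]\subseteq\pat{w}{1}$ — contradiction. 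The remaining possibility $a^{*}=0$ means $v$ fires throughout $[0,b]$; since the lemma is applied inside the periodic regime (where indices may be shifted), this forces $v$ to fire on a whole period, hence always, so by the maximal-firing propagation recalled above every vertex fires always and the conclusion is trivial. (One could instead deduce the $f=1$ case from the $f=0$ case via the chip-complementation symmetry $\chips{v}{t}\mapsto 2\deg[]{v}-1-\chips{v}{t}$, which swaps firing with waiting and preserves \eqref{gameDef}; but the same boundedness point resurfaces there.)
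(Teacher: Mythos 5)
Your proof is correct and is essentially the paper's argument: the paper derives this lemma from the telescoped conservation identity \eqref{gameDef} (packaged as its Lemma~\ref{strongbg}) together with the pigeonhole principle over the $\deg{v}$ neighbors, which is exactly your computation in both cases. The only divergence is at the left endpoint in the $f=1$ case, where the paper simply invokes the two-sided bound $0 \le \chips{v}{t} - \deg{v}\firing{v}{t} \le \deg{v}-1$ valid at periodic positions, whereas your $a^{*}$-extension trick reaches the same conclusion while correctly isolating periodicity as needed only for the degenerate always-firing case --- a real subtlety, since the $f=1$ statement can fail at non-periodic positions where $\chips{v}{a-1} \ge 2\deg{v}$.
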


Less technically, every clump of firing or waiting by a vertex must be
supported by at least one of its neighbors. The lemma follows from the
pigeonhole principle and Lemma~\ref{strongbg}, which we state and prove later.

\begin{thm} \label{cheapLunch}
Let $\s$ be a periodic motorized game on tree $T$. For all $v \in \V[T]$ and
$f \in \{0,1\}$, if $[a, b]$ is an $f$-clump of $v$, then $[a-D, b-D]$ is an
$f$-clump of $m$ for some $m \in \mots$, where $D$ is the distance from $m$ to
$v$.
\end{thm}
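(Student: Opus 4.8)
The plan is to induct on the distance $D$ from $v$ to the motor $m$. The base case $D=0$ is vacuous (there $v$ is itself a motor). For the inductive step, suppose $[a,b]\subseteq\pat{v}{f}$ with $a<b$, so this is contained in a max-clump of $v$. By Lemma~\ref{bitarGoles}, there is a neighbor $w\in\N{v}$ with $[a-1,b-1]\subseteq\pat{w}{f}$. If I could guarantee that $w$ is strictly closer to some fixed motor than $v$ is, I would be done by induction after shifting the interval by one. The obstacle is that Lemma~\ref{bitarGoles} gives \emph{some} neighbor $w$, not necessarily one on a shortest path toward a motor; a priori $w$ could be a child of $v$ (farther from every motor), and then the naive induction fails.

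To handle this, I would set up the induction on the \emph{depth} of $v$ in a rooted version of $T$, or more robustly argue by contradiction using a vertex that is ``deepest'' in the relevant sense. Concretely: root $T$ at the motor $m$ (or, if there are several motors, pick the tree structure so that distances to $\mots$ are respected), and suppose for contradiction that some vertex $v$ has an interval $[a,b]\subseteq\pat{v}{f}$ with $a<b$ but no ancestor-aligned motor clump as claimed. Among all such counterexamples, choose one maximizing the depth of $v$ (this is legitimate because $T$ is finite and all firing sequences are periodic, so everything is determined by finitely much data per period). Applying Lemma~\ref{bitarGoles} produces a neighbor $w$ with $[a-1,b-1]\subseteq\pat{w}{f}$; since $v$ was chosen of maximal depth among counterexamples, $w$ cannot be a strictly deeper counterexample, so either $w$ is strictly shallower (a proper ancestor, reducing $D$ by one, and we win by induction), or $w$ is deeper but \emph{not} a counterexample, meaning the conclusion already holds for $w$ and again transfers to $v$ by shifting.

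The delicate point — and the one I'd expect to be the main obstacle — is the case where $w$ is a child of $v$: then the motor clump we get for $w$ lies ``below'' $v$, not at the claimed distance $D$ from $v$ along a shortest $m$-$v$ path. I think the resolution is that in a tree every motor clump for a descendant $w$ of $v$ at distance $D'$ from $v$'s ancestor motor forces, by applying Lemma~\ref{bitarGoles} repeatedly \emph{back up} the path, a clump for $v$ itself; but one must be careful that walking back up doesn't stall at the wrong vertex. Alternatively — and this is probably cleaner — one observes that $w$ being a child still has \emph{its own} motor at some distance, and since $T$ is a tree the unique path from that motor to $w$ must pass through $v$, so the clump for $w$ (appropriately time-shifted) propagates through $v$. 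Either way, the key structural fact is that in a tree, a clump at a motor and a clump at $v$ are linked by a \emph{unique} path, which is what lets the one-step lemma chain cleanly without ambiguity.

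Finally, I would double-check the boundary condition $\firing{v}{a-1}=\firing{v}{b+1}=1-f$ in the definition of max-clump: Lemma~\ref{bitarGoles} as stated only needs $[a,b]\subseteq\pat{v}{f}$, not maximality, so I can afford to shrink or shift intervals freely during the induction without worrying about preserving the endpoints being ``flips.'' This means the cleanest statement to induct on is exactly the one in the theorem (``$[a,b]\subseteq\pat{v}{f}$ with $a<b$ implies a shifted clump at some motor''), with the shift bookkeeping $[a-D,b-D]$ following automatically from iterating the shift-by-one from Lemma~\ref{bitarGoles}.
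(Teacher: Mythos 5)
You have correctly located the crux: Lemma~\ref{bitarGoles} hands you \emph{some} neighbor $w$, which may lie farther from every motor than $v$ does, so a naive induction on distance stalls. But neither of your proposed repairs closes this gap. The extremal-depth argument founders on a time-shift mismatch. Suppose $w$ is a child of $v$ and, being deeper than your maximal-depth counterexample, satisfies the conclusion via some motor $m_w$ at distance $D_w$ from $w$. If the path from $m_w$ to $w$ passes through $v$ --- which is \emph{always} the case when there is a single motor at the root, i.e.\ exactly the setting of Corollary~\ref{freeLunch} --- then applying $w$'s conclusion to $[a-1,b-1]$ yields $[a-1-D_w,\,b-1-D_w] \subseteq \pat{m_w}{f}$, whereas the distance from $m_w$ to $v$ is $D_w-1$, so the conclusion for $v$ demands $[a-D_w+1,\,b-D_w+1] \subseteq \pat{m_w}{f}$. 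These intervals differ by a shift of $2$, and nothing in your argument bridges them. Your alternative observation that ``the unique path from that motor to $w$ must pass through $v$'' describes precisely the case in which the mismatch occurs (and is false when $m_w$ lies below $w$, which is in fact the only case where the transfer works).

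The missing idea is the one you explicitly discard in your final paragraph: the maximality of clumps is not dispensable bookkeeping but the engine of the proof. The paper extends $[a,b]$ to a max-clump $[a_0,b_0]$ of $v_0=v$ and, at each application of Lemma~\ref{bitarGoles}, extends the produced interval to a max-clump $[a_i,b_i]$ of $v_i$ with $[a_{i-1}-1,b_{i-1}-1] \subseteq [a_i,b_i]$. Maximality then forbids backtracking: if $v_i = v_{i-2}$, then $[a_{i-2}-2,b_{i-2}-2] \subseteq [a_i,b_i] \subseteq \pat{v_{i-2}}{f}$ together with $[a_{i-2},b_{i-2}] \subseteq \pat{v_{i-2}}{f}$ gives $[a_{i-2}-2,b_{i-2}] \subseteq \pat{v_{i-2}}{f}$, so $\firing{v_{i-2}}{a_{i-2}-1}=f$, contradicting that $[a_{i-2},b_{i-2}]$ is a max-clump. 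A non-backtracking walk on a tree visits distinct vertices, hence is a simple path and must terminate; it can only terminate at a motor (Lemma~\ref{bitarGoles} always continues the walk from a non-motor), and since each step shifts the interval back by exactly one and the walk traces the unique $v$--$m$ path, the total shift equals the distance $D$. Without the max-clump device, or some substitute for it, your chain of neighbors could in principle oscillate between two adjacent vertices forever and never reach a motor at all.
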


\begin{proof}
The result is clear if all vertices either always fire or always wait. In all
other cases, each firing sequence has a max-clump, and the argument is roughly
as follows. By Lemma~\ref{bitarGoles}, each clump of a vertex must be supported
by a clump of a neighbor.  Following the ``chain of support'' gives a sequence
of vertices that either is infinite or ends with a motor. If we consider the
containing max-clumps of clumps, we can guarantee a sequence with no
backtracking. Trees have no cycles, so the sequence must end with a motor. The
details follow.

Let $v_0 = v$ and $[a_0, b_0] \supseteq [a, b]$ be an $f$-max-clump of
$v_0$. By Lemma~\ref{bitarGoles}, given a vertex $v_i \not\in \mots$ with clump
$[a_i, b_i]$, we can pick a supporting vertex $v_{i+1} \in \N{v_i}$ and
integers $a_{i+1}$ and $b_{i+1}$ such that $[a_{i+1}, b_{i+1}]$ is an
$f$-max-clump of $v_i$ and $[a_i - 1, b_i - 1] \subseteq [a_{i+1},
b_{i+1}]$. (The fact that $\s$ is periodic means we need not worry about
negative turn numbers.) If there is a maximum $i$ for which $v_i$ exists, that
vertex must be a motor, which would mean $[a-D, b-D] \subseteq [a_D, b_D]$,
where $D$ is the maximum $i$ and $m = v_D \in \mots$. Thus, it suffices to show
that the sequence $(v_0, v_1, \ldots)$ eventually terminates. There are
finitely many vertices in the graph, so it suffices to show that the $v_i$ are
all distinct.

$T$ has no cycles, so if $v_i \neq v_{i+2}$ for all $i$, then all $v_i$ are
distinct. Suppose that $v_i = v_{i+2}$ for some $i$. Then $[a_i, b_i] \cup
[a_{i+2}, b_{i+2}]$ would be a clump of $v_i$. However, $[a_i - 2, b_i - 2]
\subseteq [a_{i+2}, b_{i+2}]$, making $[a_i - 2, b_i]$ a clump of $v_i$. But
$[a_i, b_i]$ is a max-clump for all $i$, so $v_i \neq v_{i-2}$ for all $i$.
\end{proof}

Call a firing sequence \emph{clumpy} if it contains two consecutive 0s and two
consecutive 1s; otherwise, call it \emph{nonclumpy}.

\begin{cor} \label{freeLunch}
Let $\s$ be a periodic motorized game on tree $T$ with a single motor $m$. If
$m$ has a nonclumpy firing sequence but has at least one clump, then
$\firing{v}{t+D} = \firing{m}{t}$ for all $v \in \V[T]$ and $t \in \nats$,
where $D$ is the distance from $v$ to $m$.
\end{cor}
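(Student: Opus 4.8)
The plan is to derive the corollary from Theorem~\ref{cheapLunch} — applied once with $f=1$ and once with $f=0$ — together with one self-contained combinatorial lemma about periodic binary sequences.

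I would first normalize $m$'s firing sequence. If $m$ never fires, then by the equal-firing-count fact every vertex fires $0$ times per period, all firing sequences vanish, and the claim holds; so assume $m$ fires at least once. Being nonclumpy with a max-clump, $m$'s sequence contains exactly one of the blocks $00$ and $11$, and since Theorem~\ref{cheapLunch} and the equal-count fact are both symmetric under swapping $0$ and $1$, I may assume it contains $00$ but not $11$. By Theorem~\ref{cheapLunch} with $f=1$, a block $11$ anywhere in a vertex's firing sequence would force a $11$ in $m$'s sequence, which is impossible; so no vertex fires twice in a row. Hence, over one period, each vertex's firing pattern is a cyclic binary string with exactly $k$ ones, no two consecutive, where $k$ is the common per-period firing count; and since $m$'s string also contains a run of $\ge2$ zeros, $\period>2k$.

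Now fix $v$, set $D=\mathrm{dist}(v,m)$, $\gamma_t=\firing{v}{t+D}$, $\beta_t=\firing{m}{t}$. Since $\gamma$ is $v$'s firing sequence up to the shift by $D$, Theorem~\ref{cheapLunch} with $f=0$ says every run of $\ge2$ consecutive zeros of $\gamma$ lies inside a maximal zero-run of $\beta$. The corollary then reduces to the lemma: \emph{if $\gamma,\beta$ are periodic binary sequences of period $\period$, each with exactly $k$ ones per period and no two consecutive, with $\period>2k$, and every run of $\ge2$ consecutive zeros of $\gamma$ lies in a maximal zero-run of $\beta$, then $\gamma=\beta$.} To prove it, let $P,Q\subseteq\ints/\period\ints$ be the positions of the ones of $\gamma$ and $\beta$; then $|P|=|Q|=k$ and neither $P$ nor $Q$ contains two cyclically adjacent elements. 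Every length-$\ge2$ zero-run of $\gamma$ is disjoint from $Q$, so $Q\subseteq P\cup Z$, where $Z$ is the set of isolated zeros of $\gamma$. Suppose $Q$ meets $Z$ in $z_1,\dots,z_r$ with $r\ge1$; each $z_j$ lies strictly between two ones $z_j-1,z_j+1\in P$, so by independence of $Q$ none of these neighbors is in $Q$, hence $N:=\bigcup_j\{z_j-1,z_j+1\}\subseteq P\setminus Q$ and $k=|Q|\le|P|-|N|+r$, i.e.\ $|N|\le r$. On the other hand, the $r$ isolated zeros sit in $r$ distinct length-$1$ gaps between consecutive ones of $\gamma$; identifying the $k$ ones of $\gamma$ with the vertices of a $k$-cycle and the $k$ gaps with its edges, the hypothesis $\period>2k$ makes some gap have length $\ge2$, so $r<k$, the chosen $r$ edges form a forest, and a set of $r\ge1$ forest edges is incident to more than $r$ vertices — namely the elements of $N$. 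This contradicts $|N|\le r$, so $Q\cap Z=\varnothing$, $Q\subseteq P$, and $|Q|=|P|$ forces $Q=P$, i.e.\ $\gamma=\beta$, which is exactly $\firing{v}{t+D}=\firing{m}{t}$. (If $m$'s sequence has no $00$ rather than no $11$, one runs the argument verbatim with $0$ and $1$ exchanged.)

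The step I expect to be the main obstacle is the combinatorial lemma. Theorem~\ref{cheapLunch} controls only the \emph{long} zero-runs of each vertex's firing pattern, so it does not by itself stop a vertex from deviating from $m$ by shuffling isolated zeros around. The realization that makes the argument work is that $m$'s absence of $11$ turns $\beta$'s set of ones into an independent set, and a size-$k$ independent set in the $k$-cycle formed by $\gamma$'s ones has nowhere to lie except on $P$; the counting above converts this into a contradiction.
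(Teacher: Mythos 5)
Your proof is correct, and it lands on the same two pillars as the paper's: Theorem~\ref{cheapLunch} pins each vertex's clumps inside shifted clumps of $m$, and the equal-firing-count fact does the rest. The difference is in how "the rest" is done. The paper's proof closes with a one-sentence assertion---outside its max-clumps $v$ alternates starting and ending with $1-f$, and "the same must be true of $m$ for it to fire the same number of times each period as $v$"---which leaves implicit why $m$'s clumps cannot strictly exceed the shifted images of $v$'s clumps, and why $m$ cannot have extra clumps compensated by phase shifts in the alternating stretches. You isolate exactly this step as a standalone lemma about cyclic binary strings with $k$ pairwise non-adjacent ones and $\period>2k$, and prove it by counting: any ones of $\beta$ sitting on isolated zeros of $\gamma$ select $r<k$ gap-edges of the $k$-cycle on $\gamma$'s ones, which form a forest covering more than $r$ vertices of $P\setminus Q$, contradicting $|N|\le r$. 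That is a genuinely different (and fully rigorous) closing argument, and it buys a reusable statement about periodic binary sequences independent of the chip-firing context. Two small points in your favor: you only invoke Theorem~\ref{cheapLunch} on blocks of length at least $2$, which is precisely the regime where its proof (which silently extends $[a,b]$ to a max-clump) applies without caveat; and your degenerate cases ($k=0$ handled separately; $k=1,2$ where the "$k$-cycle" is a multigraph) all survive because $r<k$ still forces the chosen edges to be acyclic.
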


\begin{proof}
The result is again clear in the always waiting and always firing cases. In all
other cases, $m$ has an $f$-max-clump, where $f \in \{0,1\}$. Let $v \in
\V[T]$. By Theorem~\ref{cheapLunch}, $v$ has a nonclumpy firing sequence
because $m$ does. All vertices fire the same number of times every
period~\cite[Proposition~2.5]{jiang}, so $v$ must have at least one max-clump,
again because $m$ does. For every $f$-max-clump $[a, b]$ of $v$, $[a-D, b-D]$
is an $f$-clump of $m$. The non-max-clump intervals of $v$'s firing sequence
are alternations between 0 and 1, starting and ending with $1-f$. The same must
be true of $m$ for it to fire the same number of times as $v$ each period.
\end{proof}

The reason we require the games in Theorem~\ref{cheapLunch} and
Corollary~\ref{freeLunch} to be periodic is to consider arbitrarily many past
turns. We can likely weaken this condition if we require the statements to be
true only after sufficiently many turns, though exactly how many turns that is
could depend on the activity (firing frequency; see~\cite{levine}) of the
motor, the size of the tree, and the total number of chips in the initial
position.

\section{Simulating Motors} \label{simulatingMotors}
\showgame

In this section, to refer to multiple chip-firing games unambiguously, we
include the subscripts and superscripts in, for example, $\deg[G]{v}$ and
$\firing{v}{t}$.

We call a firing sequence $(f_t)_{t \in \nats}$ \emph{possible} if there exists
an ordinary game $\s$ on some graph $G$ such that $\firing{v}{t} = f_{t}$ for
all $t \in \nats$. Our next theorem states that we can simulate motorized games
with ordinary games as long as every motor's firing sequence is
possible. Figure~\ref{natMot} demonstrates the concept.

\begin{centering}
\begin{figure}[tbh]
  \subfloat{\includegraphics[width=\figWidthA]{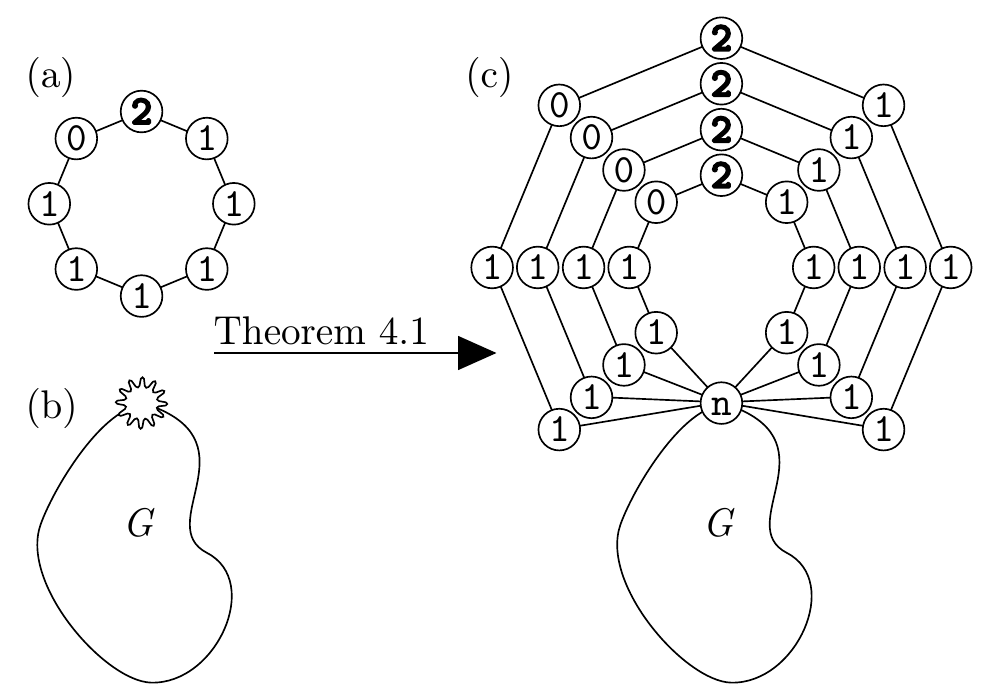}}
  \subfloat{\includegraphics[width=\figWidthB]{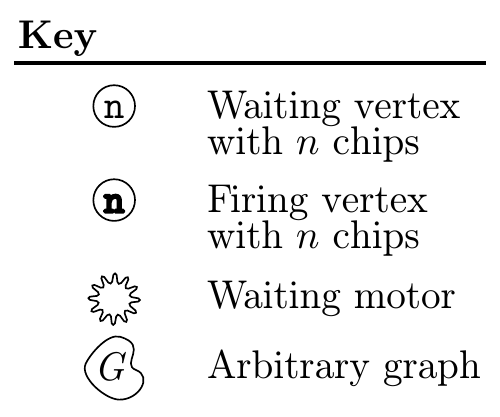}}
  \caption{Suppose the motor in motorized game (b) has firing sequence
    $(0,0,0,0,1,0,0,0,\dots)$. This occurs in ordinary game (a). By using
    sufficiently many copies of (a) and carefully choosing $n$, we construct
    (c). The behavior of $G$ in (c) is identical to the behavior of $G$ in
    (b).}
\label{natMot}
\end{figure}
\end{centering}

\begin{thm} \label{natMotors}
Let $\s$ be a motorized game on $G$. If every motor's firing sequence is
possible and has the same activity (firing frequency; see~\cite{levine}), then
there exists an ordinary game $\s'$ on a graph $H \supseteq G$ such that
\begin{itemize}
\item $\firing[\s']{u}{t} = \firing{u}{t}$ for all $t \in \nats$ and $u \in
  \V$,
\item $\deg[H]{v} = \deg[G]{v}$ for all $v \in \V \setminus \mots$, and
\item the subgraph of $H$ induced by $V(G)$ is $G$. (That is, $H$ contains no
  edges between vertices of $G$ that are not also in $G$.)
\end{itemize}
\end{thm}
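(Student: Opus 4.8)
The plan is to take each motor $m \in \mots$ and replace it by a ``driver gadget'': a fresh copy of an ordinary game $\s_m$ on a graph $G_m$ in which some vertex $r_m$ fires exactly with the sequence $(\firing{m}{t})_{t \in \nats}$, the existence of which is guaranteed by the hypothesis that the motor's firing sequence is possible. We then glue these gadgets onto $G$ in such a way that the behavior of the non-motor part of $G$ is unchanged. The subtlety is that the vertices $r_m$ inside the gadgets obey the ordinary firing rule, so for the construction to work we must arrange that $r_m$ still fires according to the prescribed sequence \emph{after} being connected to the rest of the structure, and simultaneously that each former motor vertex $m$ (now a genuine vertex of $H$) fires according to $\firing{m}{\cdot}$ despite now obeying the ordinary rule.

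First I would handle the easiest part of the gluing: since in a motorized game $\firing{m}{t}$ does not depend on $\chips{m}{t}$, I can freely choose the initial chip count on $m$ in the new game $\s'$; I will set it so that the ordinary firing condition at $m$ reproduces the motor sequence, which is possible provided the ``net inflow'' to $m$ from its $G$-neighbors plus its gadget-neighbors makes $\chips[\s']{m}{t}$ cross the threshold $\deg[H]{m}$ exactly when $\firing{m}{t}=1$. The cleanest way to force this is to make $m$ adjacent, in $H$, to many private copies of the same driver gadget — say $n_m$ copies, each contributing a vertex that fires in lockstep with $r_m$ — and to add enough ``sink'' leaves or adjust edge multiplicities (staying within the stated freedom, since we only promised $\deg[H]{v}=\deg[G]{v}$ for $v \notin \mots$, not for the motors) so that $m$'s chip count, driven predominantly by these $n_m$ synchronized neighbors, is squeezed into firing exactly on $\pat{m}{1}$ regardless of the bounded contribution from its original $G$-neighbors. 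Choosing $n_m$ large (larger than $\deg[G]{m}$ times the period, say) makes the gadget contribution dominate, which is exactly the role of ``carefully choosing $n$'' in Figure~\ref{natMot}.

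Next I would verify that the driver gadgets themselves are not disturbed by the gluing. This is where Corollary~\ref{freeLunch}, or more precisely the philosophy behind it, could help, but the honest route is: run $\s_m$ on $G_m$ to its periodic regime, take that periodic portion as the initial position, and observe that within each gadget copy the vertices other than the attachment vertex $r_m$ have their full original degree in $H$ and receive exactly the chips they would in $\s_m$, so by induction on distance within the gadget (using determinism of \eqref{gameDef}) every gadget vertex, including $r_m$, fires exactly as in $\s_m$ — \emph{provided} the one new edge from $r_m$ to $m$ does not change $r_m$'s behavior. To kill that possibility I would not attach $r_m$ to $m$ directly; instead I would attach each gadget to $m$ through $r_m$ but compensate $r_m$'s degree and initial chips so its threshold is unchanged, i.e., treat $m$ as just one more neighbor of $r_m$ and add to $r_m$ enough initial chips (and, if needed, one balancing leaf) that the extra incoming/outgoing chip along the $r_m$–$m$ edge never flips $\firing[\s']{r_m}{t}$; since $m$ fires on a fixed known sequence, this is a finite bookkeeping adjustment. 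Then $r_m$ fires as desired, hence so do the $n_m$ synchronized neighbors of $m$, closing the loop.

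The main obstacle, and the step I expect to occupy most of the real proof, is the simultaneous consistency of these two adjustments: $m$'s firing must be forced by its gadget-neighbors, whose firing must in turn be unaffected by $m$. I would resolve this by making the dependence one-directional in magnitude — the gadgets are "heavy" (many synchronized copies, high-degree internal structure) so they drive $m$, while $m$ is "light" relative to each individual gadget vertex, contributing at most one chip per turn across a single edge, an amount absorbed by the slack built into each $r_m$'s chip count. Formally this becomes an inequality: choosing $n_m > \period \cdot \deg[G]{m}$ (or a similar explicit bound) guarantees that for every $t$ in a period, $\chips[\s']{m}{t} \geq \deg[H]{m}$ iff $\firing{m}{t} = 1$, while the slack constant $c_m$ added to each $r_m$ guarantees $\firing[\s']{r_m}{t}$ is unperturbed. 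Once both inequalities are established, the three bulleted conclusions follow immediately: the $\firing[\s']{u}{t} = \firing{u}{t}$ for $u \in V(G)$ by downward induction from the motors using determinism; $\deg[H]{v} = \deg[G]{v}$ for non-motor $v$ by construction (we only ever added edges incident to motors or to new gadget vertices); and the induced-subgraph condition because every new edge touches either a motor or a gadget vertex, never two original non-motor vertices.
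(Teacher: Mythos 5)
Your overall strategy---import, for each motor $m$, many copies of an ordinary game containing a vertex with $m$'s firing sequence, so that the gadgets' influence dominates the bounded influence of $m$'s original neighbors---is the same idea the paper uses, but your specific gluing fails at its central step: forcing the former motor $m$ to fire on schedule. You attach each driver gadget to $m$ by a new \emph{edge} from $r_m$ to $m$ and arrange that the $n_m$ gadget neighbors fire in lockstep with $m$. Track what this does to $m$'s chip count: each turn $m$ receives $n_m\firing[\s']{r_m}{t}$ chips from the gadget neighbors, and when it fires it emits $\deg[H]{m}=\deg[G]{m}+n_m$ chips. If the synchronization you want actually holds, i.e.\ $\firing[\s']{r_m}{t}=\firing{m}{t}$, these two effects cancel exactly, so the net change in $\chips[\s']{m}{t}$ per turn is $\receiving{m}{t}-\firing{m}{t}\deg[G]{m}$, the same as in the motorized game, and hence $\chips[\s']{m}{t}=\chips{m}{t}+C$ for a constant $C$ no matter how large $n_m$ is. For $m$ to obey the ordinary rule you would then need $\chips{m}{t}+C\ge\deg[H]{m}$ exactly when $\firing{m}{t}=1$, i.e.\ the motor's \emph{original} chip trajectory would have to separate its firing times from its waiting times by a threshold. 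Nothing guarantees this: the defining feature of a motor is that $\firing{m}{t}$ is independent of $\chips{m}{t}$, and one easily builds periodic motorized games (e.g.\ a motor with a clumpy firing sequence attached to a single leaf) in which $m$ holds fewer chips on some firing turn than on some waiting turn. No choice of $n_m$, of initial chips, or of ``sink'' leaves rescues this; the ``heavy gadget'' intuition fails precisely because the inflow from synchronized neighbors is cancelled by the enlarged outflow.

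The paper sidesteps this by not using an edge at all: it \emph{identifies} $m$ with the driver vertex $u_m$ in each of $k_m$ copies of the gadget (a wedge sum), where $k_m=b_m-a_m+1$ is the range of $\chips{m}{t}$ over a period. Then the invariant $\chips[\s']{m}{t}=k_m\chips[\s^m]{m}{t}+\deg[G]{m}+\chips{m}{t}-a_m$ is propagated by induction on $t$, so $m$'s chip count is an amplified copy of the driver's; since the driver's count already sits on the correct side of its own threshold $\deg[{A_m}]{m}$ (it lives in an ordinary game), multiplying by $k_m$ widens that gap by enough to absorb both the $\deg[G]{m}$ term and the motor's original chip fluctuation. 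Your secondary worry---protecting $r_m$ from the back-reaction of the new edge---simply disappears under identification, and the circularity you flag is resolved by exactly the single induction on $t$ that the paper runs. (An edge-based attachment could be salvaged by having the gadget neighbors fire one step \emph{ahead} of $m$, so that the inflow contributes $n_m\firing{m}{t}$ to $\chips[\s']{m}{t}$ instead of cancelling, but that is not what you wrote.)
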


\begin{proof}
Our approach will be, for each $m \in \mots$, to attach many copies of a graph
with a vertex with $m$'s firing sequence to $m$. If sufficiently many copies
are attached, the number of chips $m$ has due to its neighbors in $G$ becomes
irrelevant as to whether or not it fires.

For each $m \in \mots$, let $A_m$ be a graph such that there exists a game
$\s^m$ and some vertex $u _m\in \V[A_m]$ such that $\firing[\s^m]{u_m}{t} =
\firing{m}{t}$ for all $t \in \nats$. Let $a_m$ and $b_m$ be the minimum and
maximum respectively of $\set{\chips{m}{t}}{t \in \nats}$. These bounds exist
because all the motors have possible firing sequences with the same activity,
which means the motorized game is eventually periodic. Let $k_m = b_m - a_m +
1$, and let $H$ be the union of $G$ and $k_m$ copies of each $A_m$, with $G$
and the copies of $A_m$ disjoint except for $m = u_m$ for each $m \in \mots$.

It is clear by construction that $H$ contains no new edges between vertices of
$G$ and that
\begin{itemize}
\item $\deg[H]{m} = k_m\deg[{A_m}]{u_m} + \deg[G]{m}$ for all $m \in \mots$,
\item $\deg[H]{u} = \deg[{A_m}]{u}$ for all $u \in \V[A_m] \setminus \{m\}$ for
  each $m \in \mots$, and
\item $\deg[H]{v} = \deg[G]{v}$ for all $v \in \V \setminus \mots$.
\end{itemize}
Suppose that for some $t \in \nats$, $\pos[\s']{t}$ satisfies the following.
\begin{enumerate} \label{posAtT}
\item $\chips[\s']{m}{t} = k_m\chips[\s^m]{u_m}{t} + \deg[G]{m} + \chips{m}{t}
  - a_m$ for all $m \in \mots$.
\item $\chips[\s']{u}{t} = \chips[\s^m]{u}{t}$ for all $u \in \V[A_m] \setminus
  \{m\}$ for each $m \in \mots$.
\item $\chips[\s']{v}{t} = \chips{v}{t}$ for all $v \in \V \setminus \mots$.
\end{enumerate}
We will show that $\pos[\s']{t+1}$ satisfies the above as well. We have
$\deg[H]{v} = \deg[G]{v}$ for all $v \in \V \setminus \mots$, so
$\firing[\s']{v}{t} = \firing{v}{t}$ for all $v \in \V \setminus
\mots$. Similarly, $\firing[\s']{u}{t} = \firing[\s^m]{u}{t}$ for all $u \in
\V[A_m] \setminus \{m\}$ for each $m \in \mots$. Finally, for all $m \in
\mots$, if $\firing[\s^m]{u_m}{t} = 0$, then
\begin{align*}
  \chips[\s']{m}{t} &\leq k_m(\deg[{A_m}]{u_m}-1) + \deg[G]{m} + \chips{m}{t} -
  a_m \\
  &= k_m\deg[{A_m}]{u_m} + \deg[G]{m} + (\chips{m}{t} - b_m) - 1 \\
  &\leq \deg[H]{m} - 1,
\end{align*}
and if $\firing[\s^m]{u_m}{t} = 1$, then
\begin{align*}
  \chips[\s']{m}{t} &\geq k_m\deg[{A_m}]{u_m} + \deg[G]{m} + (\chips{m}{t} - a_m)
  \\
  &\geq \deg[H]{m},
\end{align*}
so $\firing[\s']{m}{t} = \firing[\s^m]{u_m}{t} = \firing{m}{t}$.

We now know that $\firing[\s']{v}{t} = \firing{v}{t}$ for all $v \in \V[H]$, so
$\chips[\s']{v}{t+1} = \chips{v}{t+1}$ for all $v \in \V[G] \setminus \mots$
and $\chips[\s']{u}{t+1} = \chips[\s^m]{u}{t+1}$ for all $u \in \V[A_m]
\setminus \{m\}$ for each $m \in \mots$. Finally, we have
\begin{align*}
  \chips[\s']{m}{t+1} &= k_m\chips[\s^m]{u_m}{t} + \deg[G]{m} + \chips{m}{t} -
  a_m + \receiving[\s']{m}{t} - \firing[\s']{m}{t}\deg[H]{m} \\
  &= k_m\chips[\s^m]{u_m}{t} + \deg[G]{m} + \chips{m}{t} - a_m +
  \receiving{m}{t}
  - \firing{m}{t}\deg[G]{m}\ + \\
  &\qquad k_m\receiving[\s^m]{u_m}{t} -
  k_m\firing[\s^m]{u_m}{t}\deg[{A_m}]{u_m} \\
  &= k_m(\chips[\s^m]{u_m}{t} + \receiving[\s^m]{u_m}{t} -
  \firing[\s^m]{u_m}{t}\deg[{A_m}]{u_m}) + \deg[G]{m}\ + \\
  &\qquad (\chips{m}{t} + \receiving[\s]{m}{t} - \firing[\s]{m}{t}\deg[G]{v)} -
  a_m \\
  &= k_m\chips[\s^m]{u_m}{t+1} + \deg[G]{m} + \chips{m}{t+1} - a_m.
\end{align*}
for all $m \in \mots$.

We can distribute chips in $\pos[\s']{0}$ such that it satisfies (1), (2), and
(3), in which case, by induction, $\pos[\s']{t}$ satisfies (1), (2), and (3)
for all $t \in \nats$, implying $\firing[\s']{u}{t} = \firing{u}{t}$ for all $v
\in V(G)$.
\end{proof}

In Theorem~\ref{cheapLunch}, motors were primarily a convenient intuition and
terminology; we could have proved a similar theorem within the context of the
ordinary parallel chip-firing game, though its statement would have been
messier. Theorem~\ref{natMotors} demonstrates another way in which the motor
concept is useful. Its constructive power makes certain conjectures easy to
prove or disprove by example. For instance, motors make it easy to construct
games in which the period isn't bounded by the number of vertices.

\hidegame

\section{Signed Sums of Binary Sequences}\label{binSeq}
We take a brief break from the parallel chip-firing game itself to consider
binary strings.  Throughout this section, $p$ and $q$ are length-$n$ binary
strings, $b \in \{0,1\}$, and $\ol{b} = 1-b$. We denote the $i$\xth element of
a binary string $p$ as $p_i$, and any integer equivalent to $i \bmod n$ may
replace $i$.

The following definition formalizes the notion of part of a string being
``mostly'' 0s or 1s. A \emph{$b$-sector} of $p$ is a $\subseteq$-maximal
integer interval $[x,y]$ such that $p_{y-1} = p_y = b$ and either $p_i = b$ or
$p_{i+1} = b$ for all $i \in [x,y]$. Informally, $b$-sectors end with
consecutive $b$s and extend back as far as possible. We have to make an
exception for always-alternating strings, such as 01010101, that have neither 0
nor 1 twice in a row. We arbitrarily define $[0, n-1]$ to be a 0-sector of
them. It is the transitions between 0- and 1-sectors that are ultimately
important, so this exception is acceptable. It is simple to verify that the
indices of every binary string have a unique decomposition into 0- and
1-sectors. An example is shown in Figure~\ref{sectorEx}.

\begin{figure}[tbh]
  \[
    \aunderbrace[l1r]{01000100}
    \!\aoverbrace[L1R]{11011}
    \!\aunderbrace[l1r]{00}
    \!\aoverbrace[L1R]{101011}
  \]
  \caption{A string's 0-sectors (marked below) and 1-sectors (marked
    above).}
  \label{sectorEx}
\end{figure}

Let
\begin{align*}
  s_i(p) &= \begin{cases}
    -1 & \text{if $i$ is in a 0-sector of $p$} \\
    1 & \text{if $i$ is in a 1-sector of $p$}
  \end{cases} \\
  \delta_i(p) &= \begin{cases}
    0 & \text{if $i$ is in a $b$-sector of $p$ and $i+1$ is in a $b$-sector of
      $p$} \\
    1 & \text{if $i$ is in a $b$-sector of $p$ and $i+1$ is in a
      $\ol{b}$-sector of $p$}
  \end{cases} \\
  M_i(p,q) &= s_i(p)(p_i - q_{i-1}) + s_i(q)(q_i - p_{i-1}) - \delta_i(p) -
  \delta_i(q).
\end{align*}
Our main theorem in this section concerns what we refer to as the
\emph{mischief between $p$ and $q$},
\[
  M(p,q) = \sum_{i=0}^{n-1} M_i(p,q),
\]
Mischief, superficially speaking, measures three things: how much $p$ differs
from $q$ delayed one time step, which we call the \emph{misbehavior of $q$
  towards $p$}; vice versa; and the total number of sector switches. An example
calculation is shown in Figure~\ref{moraleEx}. The rules of the parallel
chip-firing game put a global upper bound on the total disagreement between
vertices, yet the following theorem states that mischief is nonnegative,
meaning that sector switches require disagreement. We show in
Section~\ref{nonclumpiness} that this implies that firing sequences with sector
switches are impossible once a game has become periodic.

\newbracespec{d}{!{0.05em}@{\hspace{0.1em}}!{0.05em}}
\newbracespec{e}{!{0.05em}}
\begin{figure}[tbh]
  \begin{align*}
    p\colon &\aunderbrace[e*{12}{d}1r]{010001}_4
    \!\aoverbrace[L1*{8}{d}1R]{00110}^2
    \!\aunderbrace[l1@{\hspace{0.1em}}1r]{11}_2
    \!\aoverbrace[L1*{10}{d}1R]{001010}^2
    \!\aunderbrace[l1*{2}{d}e]{11}
    & p\colon &\aunderbrace[l1r]{01000100}_2
    \!\aoverbrace[L1R]{11011}^4
    \!\aunderbrace[l1r]{00}_0
    \!\aoverbrace[L1R]{101011}^4 \\
    q\colon &\aunderbrace[1r]{1000100}_2
    \!\aoverbrace[L1R]{11011}^4
    \!\aunderbrace[l1r]{00}_0
    \!\aoverbrace[L1R]{101011}^4
    \!\aunderbrace[l1]{0}
    & q\colon &\aunderbrace[e*{14}{d}1r]{1000100}_2
    \!\aoverbrace[L1*{8}{d}1R]{11011}^4
    \!\aunderbrace[l1@{\hspace{0.1em}}1r]{00}_0
    \!\aoverbrace[L1*{10}{d}1R]{101011}^4
    \!\aunderbrace[l1]{0}
  \end{align*}
  \caption{We calculate the mischief between two strings by dividing each into
    sectors. The misbehavior of $p$ towards $q$ is $- (2-4) + (4-2) - (0 - 2) +
    (4 - 2) = 8$, as shown on the left. On the right, we see that the
    misbehavior of $q$ towards $p$ is 0. Each string has 4 sector switches, so
    $M(p,q) = 8 + 0 - 4 - 4 = 0$.}
  \label{moraleEx}
\end{figure}

\begin{thm}\label{morale}
The mischief between any two binary strings of equal length is nonnegative.
\end{thm}

\begin{proof}
Let
\[
  \mu_i(p,q) = (p_{i-1},p_i,s_i(p),s_{i+1}(p),q_{i-1},q_i,s_i(q),s_{i+1}(q)),
\]
a tuple of all information required to calculate $M_i(p,q)$, and let $\G$ be a
weighted digraph with
\begin{align*}
  \V[\G] &= \set{\mu_i(p,q)}{p,q \text{ strings}, i \in \nats} \\
  \E[\G] &= \set{(\mu_i(p,q),\mu_{i+1}(p,q),M_i(p,q))}{p,q \text{ strings}, i
    \in \nats}
\end{align*}
(The third item of each edge is its weight.) Define the weight of a path to be
the sum of the weights of its member edges, and call a path negative if it has
negative weight. The mischief $M(p,q)$ is the weight of a path in $\G$ induced
by the sequence of vertices $(\mu_0(p,q), \dots, \mu_{n-1}(p,q),
\mu_0(p,q))$. Therefore, it suffices to show that $\G$ has no negative
cycles---in particular, no negative cycles of length $n$.

We discuss below the constraints that define what tuples can exist as some
$\mu_i(p,q)$. These constraints yield a graph $\G$ with 64 vertices and 256
edges, which is impractical to analyze by hand. However, running the
Bellman-Ford algorithm~\cite{bellmanford} on $\G$ shows it to be free of
negative cycles. A Python program specifies both the graph and the algorithm in
detail in the appendix.
\end{proof}

The only constraint on edges of $\G$ is that the state shared by the connected
vertices be consistent. For example, both $\mu_i(p,q)$ and $\mu_{i+1}(p,q)$
contain $p_i$. There are two constraints on the vertices of $\G$ enforced by
the definition of sectors in $p$. Analogous constraints apply for $q$.
\begin{itemize}
\item If $p_{i-1} = p_i = b$, then $s_i(p)$ indicates that $i$ is in a
  $b$-sector of $p$.
\item If $s_i(p) \neq s_{i+1}(p)$, then $p_{i-1} = p_i = b$ such that $s_i(p)$
  indicates that $i$ is in a $b$-sector of $p$.
\end{itemize}
To get a sense for how these force nonnegative mischief, consider the latter
constraint. The sector switch reduces mischief, but it also makes $p_{i+1} =
\ol{b}$. If $q_i = b$, then $q$ misbehaves towards $p$ at $i+1$. If $q_i =
\ol{b}$ and $i$ is in a $\ol{b}$-sector of $q$, then $p$ misbehaves towards $q$
at $i$. However, we cannot finish the last case, $q_i = \ol{b}$ with $i$ in a
$b$-sector of $q$, with similar local reasoning. The former constraint limits
the frequency with which that last case can happen, but there are many
scenarios to consider before dispatching the case. Using the Bellman-Ford
algorithm avoids this further casework.

\section{Nonclumpiness of Periodic Firing Patterns}\label{nonclumpiness}
We consider parallel chip-firing game $\s$ on undirected graph $G$. The
\emph{periodic firing pattern} (PFP) of a vertex $v \in \V$ is the binary
string
\[
  \firing{v}{t_0}\dots\firing{v}{t_0 + \period - 1},
\]
where $t_0$ is the smallest natural number such that $\pos{t_0}$ is
periodic\footnote{The reason we introduce PFPs instead of continuing to reason
  with firing sequences is because a PFP is aware of the period of the game it
  occurs in. For instance, the PFPs 01 and 0101 result in the same periodic
  firing sequence, but while the latter, which has period 4, might occur in the
  same game as the PFP 0011, the former, which has period 2, cannot.}. We write
the PFP of $v$ as $\pfp{v}$. For simplicity, we assume here that $t_0 = 0$ and
index PFPs modulo $\period$.

Let $\pfps$ be the set of all PFPs that are compatible with $\s$, which means
they have the same number of 0s and 1s as a PFP that occurs in $\s$. Call a PFP
with both consecutive 0s and consecutive 1s \emph{clumpy}, and let $\cpfps$ be
the set of clumpy PFPs that are compatible with $\s$. (Recall that the
$\period$\xth and 0\xth entries of a PFP are the same, so, for example, 011010
is clumpy.) It is known that, given almost any\footnote{The given construction
  requires that the PFP not be decomposable to a repeated substring. Using
  Theorem~\ref{natMotors}, one can expand the construction to any nonclumpy PFP
  other than those that are 01 or 10 repeated more than once. These PFPs turn
  out to be impossible, though the corresponding firing sequences are possible
  in games of period 2.}  nonclumpy PFP, one can construct a parallel
chip-firing game on a simple cycle in which every vertex has that PFP shifted
by some number of steps~\cite{cycle}. We prove here that clumpy PFPs cannot
occur in any parallel chip-firing game.

\begin{lem}\label{strongbg}
In a periodic game on $G$, for all $v \in \V$ and $a, b \in \nats$,
\begin{equation}\label{local}
  -\deg{v} + 1 \leq \sum_{t=a}^{b}(\receiving{v}{t-1} - \deg{v}\firing{v}{t})
  \leq \deg{v}-1.
\end{equation}
\end{lem}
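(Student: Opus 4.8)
The plan is to turn the running sum in \eqref{local} into a telescoping expression in the chip counts $\chips{v}{t}$, and then bound the result using the fact that chip counts are nonnegative integers whose values are constrained by when $v$ fires. Recall the fundamental recurrence \eqref{gameDef}, which rearranges to $\receiving{v}{t} - \deg{v}\firing{v}{t} = \chips{v}{t+1} - \chips{v}{t}$. Replacing $t$ by $t-1$ and summing from $t=a$ to $t=b$, the middle quantity in \eqref{local} telescopes to $\chips{v}{b}-\chips{v}{a-1}$ (after a harmless shift of the summation index; periodicity is not even needed here, only that $a\ge 1$ or that we read indices appropriately, and in any case the claimed inequality is symmetric enough that we may assume $a\ge 1$, or pad with a dummy turn). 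So it suffices to show
\[
  -\deg{v} + 1 \;\le\; \chips{v}{b} - \chips{v}{a-1} \;\le\; \deg{v} - 1.
\]

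Now I would split into cases according to whether $v$ fires at the relevant endpoints. The key observation is that $\chips{v}{t} \ge 0$ always (chip counts of non-motor vertices are nonnegative by \eqref{gameDef} and induction, since a vertex that would go negative has fewer than $\deg{v}$ chips and therefore waits), and moreover if $\firing{v}{t}=1$ then $\chips{v}{t}\ge \deg{v}$, while if $\firing{v}{t}=0$ then $\chips{v}{t}\le \deg{v}-1$. For the upper bound $\chips{v}{b}-\chips{v}{a-1}\le \deg{v}-1$: if $\firing{v}{b}=0$ then $\chips{v}{b}\le\deg{v}-1$ and $\chips{v}{a-1}\ge 0$, giving the bound immediately; if $\firing{v}{b}=1$, one needs a little more care, and the cleanest route is to instead look at the turn just after the last firing in $[a,b]$, or to note that the sum $\sum_{t=a}^{b}(\receiving{v}{t-1}-\deg{v}\firing{v}{t})$ is unchanged in a way that lets us assume $\firing{v}{b}=\firing{v}{a-1}=0$ without loss of generality — if $v$ fires at $b$, shrink $b$; if $v$ fires at $a-1$... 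The symmetric argument handles the lower bound by swapping the roles of firing and waiting (equivalently, applying the upper bound to the complementary count $\deg{v}-\chips{v}{t}$, or just redoing the case analysis).

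I expect the main obstacle to be the endpoint/index-shift bookkeeping: making sure the telescoping is set up so that the $\receiving{v}{t-1}$ (note the shift by one) lines up with $\chips{v}{t}-\chips{v}{t-1}$ rather than $\chips{v}{t+1}-\chips{v}{t}$, and handling the cases where $v$ happens to fire at the boundary turns so that we genuinely get $\deg{v}-1$ and not $\deg{v}$. The way to dispatch this cleanly is the reduction in the previous paragraph: observe that if $\firing{v}{b}=1$ then removing $t=b$ from the sum changes it by $\receiving{v}{b-1}-\deg{v}$, which is at most $0$ (since $\receiving{v}{b-1}\le\deg{v}$) and at least $-\deg{v}$, so it cannot decrease the maximum or increase the minimum past the stated range; thus we may repeatedly trim firing turns from the right and, symmetrically, waiting turns from the left until $\firing{v}{b}=0$ and $\firing{v}{a-1}=0$ (or the interval is empty, in which case the sum is $0$ and the inequality is trivial since $\deg{v}\ge 1$). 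Once both endpoints are waiting turns, $0\le \chips{v}{a-1}\le\deg{v}-1$ and $0\le\chips{v}{b}\le\deg{v}-1$ force the difference into $[-(\deg{v}-1),\deg{v}-1]$, completing the proof.
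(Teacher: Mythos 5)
Your overall strategy---telescope the sum via \eqref{gameDef} and then bound endpoint chip counts---is the same one the paper uses, but the execution breaks down at exactly the point you flag as ``bookkeeping.'' The sum does \emph{not} telescope to $\chips{v}{b}-\chips{v}{a-1}$: because the receiving term is indexed by $t-1$ while the firing term is indexed by $t$, one firing term is left over at each end, and the correct identity is
\[
  \sum_{t=a}^{b}(\receiving{v}{t-1}-\deg{v}\firing{v}{t})
  = \bigl(\chips{v}{b}-\deg{v}\firing{v}{b}\bigr)
  - \bigl(\chips{v}{a-1}-\deg{v}\firing{v}{a-1}\bigr).
\]
The paper stops here and invokes the fact that the ``post-firing'' count $\chips{v}{t}-\deg{v}\firing{v}{t}$ lies in $[0,\deg{v}-1]$ at every periodic turn; both inequalities then drop out simultaneously with no case analysis and no trimming. (Your intermediate target $-\deg{v}+1\le\chips{v}{b}-\chips{v}{a-1}\le\deg{v}-1$ is, incidentally, false in general, e.g.\ when $v$ waits with $0$ chips at $a-1$ and fires at $b$.)

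The trimming patch does not repair the mismatch, and this is the genuine gap. Removing the term $t=b$ when $\firing{v}{b}=1$ deletes a quantity $\receiving{v}{b-1}-\deg{v}\in[-\deg{v},0]$, so the sum can only increase: this transfers the \emph{upper} bound from the trimmed sum back to the original one, but it destroys the \emph{lower} bound, since the original sum may be up to $\deg{v}$ smaller than the trimmed one, and knowing the trimmed sum is at least $-\deg{v}+1$ only gives $-2\deg{v}+1$ for the original. Any trim that forces an endpoint into a waiting turn is a one-sided estimate in this way, so you cannot reach a configuration with $\firing{v}{a-1}=\firing{v}{b}=0$ while retaining both inequalities. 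There is a second slip on the left: the term $t=a$ involves $\firing{v}{a}$ and $\receiving{v}{a-1}$, not $\firing{v}{a-1}$, so left-trimming never controls the quantity $\chips{v}{a-1}-\deg{v}\firing{v}{a-1}$ that actually appears (and your sentence ``if $v$ fires at $a-1$\dots'' is left unfinished). The clean fix is the paper's: keep the two boundary firing terms in the telescoped expression and bound $\chips{v}{t}-\deg{v}\firing{v}{t}$ directly at $t=b$ and $t=a-1$, rather than trying to reduce to waiting endpoints.
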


\begin{proof}
We express $\chips{v}{b}$ in terms of $\chips{v}{a-1}$:
\begin{align*}
  \chips{v}{b} &= \chips{v}{a-1} +
  \smashoperator{\sum_{t=a-1}}^{b-1}(\receiving{v}{t} - \deg{v}\firing{v}{t})
  \\
  \chips{v}{b} - \deg{v}\firing{v}{b} &= \chips{v}{a-1} -
  \deg{v}\firing{v}{a-1} + \sum_{t=a}^{b}(\receiving{v}{t-1} -
  \deg{v}\firing{v}{t}).
\end{align*}
As mentioned in \cite[Section 2]{kominers}, the set of vertices $v$ such that
$\chips{v}{t} \geq 2\deg{v}$ chips is nonincreasing as $t$ increases. In
particular, in a periodic game, this set is empty unless the game has period 1,
in which case the lemma is clear. Otherwise, simple analysis of the waiting and
firing cases shows that $\chips{v}{t} \leq 2\deg{v} - 1$ implies $0 \leq
\chips{v}{t} - \deg{v}\firing{v}{t} \leq \deg{v} - 1$, which gives the desired
inequality.
\end{proof}

We define
\begin{align*}
\pfpverts{p} &= {\set{v \in \V}{\pfp{v}=p}} \\
\pfpedges{p}{q} &= {\set{\{v,w\} \in \E}{\pfp{v}=p, \pfp{w} = q}} \\
\misb{S}{p}{q} &= \sum_{i \in S}(p_{i} - q_{i-1})
\end{align*}
for binary strings $p$ and $q$ and sets of indices $S \subseteq \nats$. Note
that $\misb{S}{p}{q}$ is very closely related to the misbehavior of $q$ towards
$p$, missing only sector-dependent signs. We now have the tools to prove our
main result.

\begin{thm}\label{nct}
Clumpy PFPs do not occur in the parallel chip-firing game.
\end{thm}

\begin{proof}
The key is to examine the mischief between all pairs of neighbor vertices in
which at least one neighbor has a clumpy PFP. Roughly speaking, summing an
inequality derived from Lemma~\ref{strongbg} over all vertices with clumpy PFPs
bounds a negative sum of mischiefs below, and summing the inequality given by
the Theorem~\ref{morale} over all edges incident with a vertex with a clumpy
PFP gives an upper bound on the same sum of mischiefs. The lower bound relates
positively with the number of vertices with clumpy PFPs, and the upper bound is
0.

Let $a,b \in \nats$ and $v \in \V$. Grouping the sum in \eqref{local} by $v$'s
neighbors instead of time steps yields
\[
  -\deg{v} + 1 \leq -\smashoperator{\sum_{w \in
      \N{v}}}\misb{[a,b]}{\pfp{v}}{\pfp{w}} \leq \deg{v} - 1.
\]
Reordering the inequality on the left, recalling that $\deg{v} = \size{\N{v}}$,
gives
\[
  1 \leq \smashoperator{\sum_{w \in \N{v}}}(1 - \misb{[a,b]}{\pfp{v}}{\pfp{w}}),
\]
and by considering the right inequality we obtain the same result with the $-$
switched to $+$. Let $p$ be a PFP. By summing the above over $v \in
\pfpverts{p}$ for some choice of signs, we obtain
\[
  \size{\pfpverts{p}} \leq \smashoperator{\sum_{\substack{v \in \pfpverts{p}
        \\ w \in \N{v}}}}(1 + r_v\misb{[a,b]}{p}{\pfp{w}}),
\]
where each $r_v = \pm1$ may be chosen dependent on $v$. (Notation: domains for
outer sums are above domains for inner sums.) That each
$\misb{[a,b]}{p}{\pfp{w}}$ term is preceded by a sign of our choice is a hint
that we can relate this quantity to misbehavior.

For all PFPs $p$, let $\sctr{p}$ be the set of sectors of $p$. Because each
sector of a PFP is of the form $[a,b]$ for some $a,b \in \nats$, we can sum the
above inequality over $X \in \sctr{p}$ and $p \in \cpfps$ to get
\begin{equation}\label{almost}
  \smashoperator[r]{\sum_{\substack{
        p \in \cpfps \\
        X \in \sctr{p}
  }}}\size{\pfpverts{p}} \leq
  \smashoperator{\sum_{\substack{
        p \in \cpfps \\ v \in \pfpverts{p} \\
        w \in \N{v} \\ X \in \sctr{p}
  }}}(1 + r_{v,X}\misb{X}{p}{\pfp{w}}),
\end{equation}
where each $r_{v,X} = \pm1$ may be chosen dependent on $v$ and $X$.

Let $p$ be a clumpy PFP. If $q$ is a clumpy PFP, then
\begin{equation}\label{misbSub1}
  M(p,q) =
  \smashoperator{\sum_{X \in \sctr{p}}}(s_X(p)\misb{X}{p}{q} - 1) +
  \smashoperator{\sum_{X \in \sctr{q}}}(s_X(q)\misb{X}{q}{p} - 1).
\end{equation}
(Abusing notation slightly, we write $s_X(p)$ instead of $s_i(p)$ if $i \in X
\in \sctr{p}$.) The $-1$ in each sum accounts for the
$-\delta_i(p)-\delta_i(q)$ term in $M_i(p,q)$. If instead $q$ is not clumpy,
then $\sctr{q} = \{[0,\period-1]\}$, so $q$ has no sector switches and
\begin{equation}\label{misbSub2}
  M(p,q) =
  \smashoperator{\sum_{X \in \sctr{p}}}(s_X(p)\misb{X}{p}{q} - 1) +
  s_{[0,\period-1]}(q)\misb{[0,\period-1]}{q}{p}.
\end{equation}
However, $\misb{[0,\period-1]}{q}{p} = 0$ because $p$ and $q$ have the same
length and number of 1s. This means that the mischief between $p$ and $q$ only
depends on the misbehavior of $q$ towards $p$ and sector switches in $p$. This
is important because Lemma~\ref{strongbg} bounds total misbehavior towards a
vertex from all neighbors.

Let $W = \set{v \in \V}{\pfp{v} \in \cpfps}$ be the set of vertices with clumpy
PFPs. Choosing $r_{v,X} = -s_X(p)$ in \eqref{almost} yields
\begin{align*}
  \smashoperator[r]{\sum_{\substack{
        p \in \cpfps \\
        X \in \sctr{p}
      }}}\size{\pfpverts{p}}
  &\leq
  \smashoperator{\sum_{\substack{
        p \in \cpfps \\ v \in \pfpverts{p} \\
        w \in \N{v} \\ X \in \sctr{p}
      }}}(1-s_X(p)\misb{X}{p}{\pfp{w}}) \\
  &=
  \smashoperator{\sum_{\substack{
        p \in \cpfps \\ v \in \pfpverts{p} \\
        w \in \N{v} \cap W \\ X \in \sctr{p}
      }}}(1-s_X(p)\misb{X}{p}{\pfp{w}}) +
  \smashoperator{\sum_{\substack{
        p \in \cpfps \\ v \in \pfpverts{p} \\
        w \in \N{v} \setminus W \\ X \in \sctr{p}
      }}}(1-s_X(p)\misb{X}{p}{\pfp{w}}) \\
  &=
  \smashoperator[l]{\sum_{\substack{
        p,q \in \cpfps \\
        e \in \pfpedges{p}{q}
      }}}\!\!
  \Bigg(\smashoperator[r]{\sum_{X \in \sctr{p}}}(1-s_X(p)\misb{X}{p}{q})
  + \smashoperator{\sum_{X \in \sctr{q}}}(1-s_X(q)\misb{X}{q}{p})\Bigg)
  \\ &\qquad +
  \smashoperator{\sum_{\substack{
        p \in \cpfps \\ v \in \pfpverts{p} \\
        w \in \N{v} \setminus W \\ X \in \sctr{p}
      }}}(1-s_X(p)\misb{X}{p}{\pfp{w}}). \\
\end{align*}
Note that we consider $p$ and $q$ in the sum over $p,q \in \cpfps$ to be
unordered. (One alternative notation is a sum over $\{p,q\} \subseteq \cpfps$.)
We now substitute using \eqref{misbSub1} and \eqref{misbSub2} and apply
Theorem~\ref{morale} to get
\[
  \smashoperator[r]{\sum_{\substack{
        p \in \cpfps \\
        X \in \sctr{p}
      }}}\size{\pfpverts{p}}
  \leq
  -\smashoperator{\sum_{\substack{
        p,q \in \cpfps \\
        e \in \pfpedges{p}{q}
      }}}M(p,q) -
  \smashoperator{\sum_{\substack{
        p \in \cpfps \\
        v \in \pfpverts{p} \\ w \in \N{v} \setminus W
      }}}M(p,F(w))
  \leq 0.
\]
Sets have nonnegative sizes, so $\size{\pfpverts{p}} = 0$ for all $p \in
\cpfps$.
\end{proof}

\section{Implications of Nonclumpiness} \label{corollaries}
It is a basic property of the parallel chip-firing game that every vertex fires
the same number of times each period~\cite{jiang}. This means, roughly
speaking, that every periodic game is either ``mostly waiting'' with bursts of
firing or ``mostly firing'' with bursts of waiting. (In fact, there is a
bijection between these two types of games. Each periodic game has a complement
that inverts firing and waiting~\cite{jiang}.) This is because if a vertex
waits twice in a row, then because it therefore never fires twice in a row, it
fires less than half the time over the course of a period. Similarly, a vertex
that fires twice in a row fires more than half the time. We cannot have a
vertex that waits twice in a row and a vertex that fires twice in a row in the
same periodic game because each vertex fires the same number of times each
period.

\begin{cor}
Once a parallel chip-firing game is periodic, either no vertex fires twice in a
row or no vertex waits twice in a row.
\end{cor}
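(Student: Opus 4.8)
The plan is to argue by contradiction, reducing everything to Theorem~\ref{nct} together with the basic fact that on a connected graph every vertex fires the same number of times each period~\cite{jiang}. Suppose $\s$ has reached a periodic position, so that (re-indexing as in Section~\ref{nonclumpiness}) every vertex $v$ has a well-defined PFP $\pfp{v}$ of length $\period$, read cyclically. Assume for contradiction that some vertex $u$ fires twice in a row and some vertex $v$ waits twice in a row during a period; equivalently, $\pfp{u}$ contains two consecutive $1$s and $\pfp{v}$ contains two consecutive $0$s (recall the $\period$\xth and $0$\xth entries of a PFP coincide).

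First I would invoke Theorem~\ref{nct}: no PFP occurring in $\s$ is clumpy, i.e.\ no PFP contains both two consecutive $0$s and two consecutive $1$s. Hence $\pfp{u}$, which already contains $11$, contains no $00$, and $\pfp{v}$, which already contains $00$, contains no $11$. Next comes a counting step about cyclic binary strings: a length-$n$ cyclic string with no two consecutive $0$s has at least as many $1$s as $0$s, with equality only for the alternating string $\dots 0101\dots$. This follows by sending each position holding a $0$ to the position immediately after it: the absence of $00$ makes this a map into the positions holding $1$s, it is injective since distinct positions have distinct successors, and if the string contains $11$ at positions $i,i+1$ then $i+1$ is missed (its predecessor holds a $1$, not a $0$), so in that case $\#1\text{s} > \#0\text{s}$. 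Applying this, since $\pfp{u}$ contains $11$ it is not alternating, so $u$ fires strictly more than $\period/2$ times each period; symmetrically $\pfp{v}$ has no $11$, is not alternating, and $v$ fires strictly fewer than $\period/2$ times each period. (This also covers $\period = 1$ correctly: a vertex ``firing twice in a row'' then has PFP $1$ and fires once, while one ``waiting twice in a row'' has PFP $0$ and fires zero times.)

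Finally, because $G$ is connected, $u$ and $v$ fire the same number of times each period~\cite{jiang}, contradicting the strict inequalities above. Hence at most one of the two phenomena can occur, which is the claim. I do not expect a genuine obstacle here: the substance is entirely carried by Theorem~\ref{nct}, and the only point needing a little care is the cyclic-string counting lemma and its equality case, which must be stated so as to yield the \emph{strict} inequalities (and, in particular, to behave correctly for small periods).
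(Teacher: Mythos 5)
Your proposal is correct and follows essentially the same route as the paper: invoke Theorem~\ref{nct} to rule out clumpy PFPs, observe that a vertex firing twice in a row must then fire more than half the time while a vertex waiting twice in a row fires less than half the time, and conclude via the fact that all vertices fire equally often each period. The paper presents this informally in the paragraph preceding the corollary; your write-up merely makes the cyclic-counting step and the small-period edge cases explicit.
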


That is, in periodic games, a firing sequence is possible if and only if it is
nonclumpy.

The \emph{interior} of a set of vertices $W$ is $\set{v \in W}{N(v) \subseteq
  W}$. Because a waiting (or firing) vertex with only waiting (or firing)
neighbors will wait (or fire) the following turn as well, the above observation
proves the following conjecture of Fey and Levine~\cite{privateComms}.

\begin{cor}\label{feyLevine}
Once a parallel chip-firing game is periodic, the interior of the set of
waiting vertices is always empty, the interior of set of firing vertices is
always empty, or both interiors are always empty.
\end{cor}

Interestingly, Corollary~\ref{feyLevine} also implies Theorem~\ref{nct}. If
clumpy PFPs were possible, then a leaf attached to a motor with a clumpy PFP
would be at different times in both the waiting and firing interiors.

In one of the first papers on the parallel chip-firing game, Bitar and Goles
characterized parallel chip-firing games on trees~\cite{bitarGoles}.
Corollary~\ref{freeLunch} and Theorem~\ref{nct} allow us to characterize the
behavior on tree-like subgraphs---subgraphs that, if an edge to a root vertex
is cut, become a tree separated from the rest of the graph---by making the root
vertex a motor.

\begin{cor}
Let $\s$ be a periodic game on $G$ with period at least 3 in which no vertex
fires twice in a row, $H$ be a tree-like subgraph of $G$, and $m \in \V[H]$ be
the root of $H$. Then for all $v \in \V[H]$,
\[
  \chips{v}{t} = \begin{cases}
    \deg{v} & \textnormal{if } \firing{m}{t-D} = 1 \\
    0 & \textnormal{if } \firing{m}{t-D-1} = 1 \\
    \deg{v} - 1 & \textnormal{otherwise},
  \end{cases}
\]
where $D$ is the distance from $m$ to $v$. An analogous result holds if no
vertex waits twice in a row.
\end{cor}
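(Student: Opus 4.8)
The plan is to reduce the statement to Corollary~\ref{freeLunch} together with the bookkeeping in \eqref{gameDef}. First I would observe that since $\s$ has period $\period \geq 3$ and no vertex fires twice in a row, every vertex's PFP is nonclumpy (indeed, it has no two consecutive $1$s) and, by Theorem~\ref{nct}, also has no two consecutive $0$s only if… — wait, more carefully: ``no vertex fires twice in a row'' gives no $11$, and since $\period \geq 3$ and each vertex fires the same number of times each period, a PFP that is an alternation would force $\period$ even with exactly $\period/2$ fires, which is fine, but the relevant point is that the motor $m$ (the root of $H$) has a nonclumpy firing sequence. I need $m$ to have at least one max-clump of $0$s, i.e.\ two consecutive waits; this holds because with $\period \geq 3$ and no $11$, the pattern cannot be a pure alternation unless $\period$ is even and the activity is exactly $1/2$, and even then — hmm. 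Let me instead handle it cleanly: if $m$'s firing sequence is a pure alternation $0101\cdots$, then so is every vertex's (all fire the same number of times, none fires twice in a row, and a neighbor of an alternating vertex must alternate by Lemma~\ref{bitarGoles}); in that case one checks $\chips{v}{t} \in \{\deg{v}, \deg{v}-1\}$ directly and the claimed formula holds with the ``otherwise'' branch covering the waits and the first branch the fires, provided the parity of the distance $D$ lines up, which it does because $m$ alternating forces $v$ to alternate with a shift of exactly $D \bmod 2$ by Theorem~\ref{cheapLunch}. If $m$'s firing sequence is not a pure alternation, then it has two consecutive waits, hence a max-clump, and Corollary~\ref{freeLunch} applies.

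Second, with Corollary~\ref{freeLunch} in hand (applied to the motorized tree obtained by cutting the edge from $H$'s root to the rest of $G$ and declaring $m$ a motor with $m$'s actual firing sequence in $\s$), I get $\firing{v}{t+D} = \firing{m}{t}$ for all $v \in \V[H]$, equivalently $\firing{v}{t} = \firing{m}{t-D}$. Now the chip count formula is just the statement that a nonclumpy periodic chip-firing game on a tree has each position determined by the firing pattern in the simplest possible way. I would prove this by induction along $H$ from the root outward, or more slickly by using \eqref{gameDef} locally: for $v \in \V[H]$ at distance $D$ from $m$, the parent $u$ of $v$ fires exactly when $\firing{m}{t-D+1}=1$ and the children fire exactly when $\firing{m}{t-D-1}=1$; since no vertex fires twice in a row, at any given turn $v$ either fires (and then it did not fire the previous turn, so it was just ``filled'' to exactly $\deg{v}$), waited having just fired the previous turn (so it emptied to $0$), or waited having not fired recently (so it sits at $\deg{v}-1$, the stable waiting value). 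Translating ``$v$ fires at $t$'' to ``$\firing{m}{t-D}=1$'' and ``$v$ fired at $t-1$'' to ``$\firing{m}{t-D-1}=1$'' gives exactly the three cases. The three cases are mutually exclusive precisely because no vertex fires twice in a row (so the first two cannot both hold) and the game is periodic (so every position is one of these three).

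The one genuinely delicate point is establishing that each position of $v$ really is one of these three values rather than verifying case-by-case transitions. I would argue: since $\pos{t}$ is periodic and $\firing{v}{\cdot}$ is an alternation-plus-isolated-$1$s pattern, between any two consecutive fires of $v$ there are at least two waits; immediately after $v$ fires, by \eqref{gameDef} its chip count is $\chips{v}{t}-\deg{v}+\receiving{v}{t}$, and I claim this equals $\deg{v}$ — this needs $\receiving{v}{t} = \deg{v}$, i.e.\ all of $v$'s neighbors fire at the turn $v$ fires. That is where ``no vertex fires twice in a row'' plus Corollary~\ref{freeLunch} is essential: $v$ fires at $t$ iff $\firing{m}{t-D}=1$, the parent fires at $t$ iff $\firing{m}{t-D+1}=1$, and a child fires at $t$ iff $\firing{m}{t-D-1}=1$; but a vertex at distance $D$ from $m$ with a nonclumpy pattern shifted by $D$ — hmm, this does \emph{not} obviously force neighbors to fire simultaneously. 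I should instead think of it the other way: when $v$ waits at $t$ and also waited at $t-1$ (the ``otherwise'' case — here I use that $v$ has not fired at $t-1$ either, which holds unless $\firing{m}{t-D-1}=1$), the number of chips is invariant under the dynamics only if $\chips{v}{t}=\deg{v}-1$; and the system of equations ``$\chips{v}{t}$ depends only on recent firings, summed over a period'' pins down the value. Concretely I would set $c_v(t) = \chips{v}{t}$, write the linear recurrence \eqref{gameDef} restricted to $H$ with the now-known firing sequences, and solve it — the recurrence is triangular once we orient $H$ toward $m$, so a clean induction on $D$ works: the root's chip values (as a motor) are irrelevant to its neighbors' firing, its children's values are determined by the root's firing sequence via exactly the displayed formula at $D=1$, and then induct. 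I expect this triangular induction, not the reduction to Corollary~\ref{freeLunch}, to be the main write-up obstacle, since one must carefully track which of the three cases each child lands in as a Boolean function of $\firing{m}{t-D}$ and $\firing{m}{t-D-1}$ and check the three cases partition all of $\nats$.
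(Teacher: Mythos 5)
Your architecture is the one the paper intends---the paper gives no written proof of this corollary beyond the remark that one cuts the edge above the root, declares the root a motor, and invokes Corollary~\ref{freeLunch}, and that is exactly your first step---but two points in your execution are genuine gaps. First, the pure-alternation case should be \emph{excluded}, not checked: if any vertex had firing sequence $0101\cdots$, then since no vertex fires twice in a row and all vertices fire equally often each period, every vertex would alternate, whence $\receiving{v}{t}+\receiving{v}{t+1}=\deg{v}$ for all $v$, so $\pos{t+2}=\pos{t}$ and $\period\leq 2$, contradicting $\period\geq 3$. Your proposed resolution of that case is not coherent: in a pure alternation every wait is immediately preceded by a fire, so the displayed formula's second branch would demand $\chips{v}{t}=0$ on every waiting turn, whereas you assert the ``otherwise'' branch ($\deg{v}-1$) covers the waits. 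Showing the case is vacuous also hands you, for free, the max-clump hypothesis of Corollary~\ref{freeLunch}: every vertex, in particular $m$, must wait twice in a row somewhere in its period.

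Second, you correctly flag the chip-pinning as the delicate point, but the proposal never supplies the step that closes it; ``solve the triangular recurrence'' cannot work on its own, because knowing all firing sequences determines $\chips{v}{\cdot}$ only up to an additive constant, and no induction on $D$ selects the constant. The missing observation is this: for $v\neq m$ at distance $D$, all neighbors of $v$ lie in $H$, the parent fires exactly when $\firing{m}{t-D+1}=1$ and every child exactly when $\firing{m}{t-D-1}=1$; since $m$ never fires twice in a row, \emph{no} neighbor of $v$ fires on a turn when $v$ fires, so $\chips{v}{t+1}=\chips{v}{t}-\deg{v}$ whenever $v$ fires at $t$. On the following turn all $\deg{v}-1$ children fire at once, so if $v$ also waits at $t+2$ (which happens for at least one firing time $t$, since $v$ is not a perfect alternator), then $\chips{v}{t+1}+\deg{v}-1\leq\deg{v}-1$ forces $\chips{v}{t+1}=0$, hence $\chips{v}{t}=\deg{v}$. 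That single anchor, propagated around the period via \eqref{gameDef} with the now-known firing times of the parent and children, produces all three cases. (Note also that this argument, like the paper's sketch, only covers $v\neq m$: the root's chip count depends on its neighbors outside $H$, so the statement for $v=m$ requires a separate justification or an exclusion.)
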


In some sense, tree-like subgraphs are passive in that their vertices fire only
in response to their root-side neighbor firing. In a periodic game, we can
completely remove tree-like subgraphs without affecting the PFPs of the other
vertices.

\begin{cor}
Let $\s$ be a periodic game on $G$, leaf $l \in \V$ have single neighbor $m$,
and $G'$ be $G$ with $l$ deleted. Then a game $\s'$ exists on $G'$ with the
same firing behavior as $\s$.
\end{cor}

The starting position $\pos[\s']{0}$ can agree with $\pos{0}$ completely except
for possibly removing a chip from $m$. We consider the case where no vertex
fires twice in a row. Compared to $\s'$, vertex $m$ has to have an extra chip
to fire in $\s$.  However, unless $m$ fired the previous turn---which, because
$l$ is a leaf, is equivalent to saying $l$ is firing this turn---$m$ will have
received the extra chip back from $l$, so removing both $l$ and the chip has no
effect on $m$ as long as $m$ does not fire while $l$ has a chip, which doesn't
happen due to nonclumpiness. The case where no vertex waits twice in a row is
analogous. This corollary concerns a leaf, though the result generalizes to all
tree-like subgraphs by repeated application, providing an alternate proof of
Corollary~\ref{freeLunch}.

\section{Discussion and Directions for Future Work} \label{discussion}

We have introduced motors, studied motorized games on trees, and shown that
motor-like behavior can be constructed in ordinary games, provided that each
motor has a possible firing sequence. We then showed that periodic firing
patterns are possible if and only if they are nonclumpy, which, among other
things, allows classification of periodic games as ``mostly waiting'' or
``mostly firing'' and the removal of tree-like subgraphs without loss of
generality.

We might expect the space of motorized games to be larger than that of ordinary
games. Theorem~\ref{natMotors} shows us that, as long as the firing sequences
involved are possible, the parallel chip-firing game is in some sense just as
``expressive'' as its motorized variant. This allows, for example, the
simulation of some aspects of the \emph{dollar game}, a variant of the general
chip-firing game discussed by Biggs~\cite{biggs}. In the dollar game, exactly
one vertex, the ``government'', may have a negative number of chips and fires
if and only if no other vertices can fire. We can construct a motorized
parallel chip-firing game in which we replace the government with a motor that
waits a sufficiently large number of steps between each firing such that it
never fires in the same step as another vertex. Biggs showed that every dollar
game tends towards a critical position regardless of the order of vertex
firings, so this motorized parallel chip-firing game tends towards the same
critical position. Theorem~\ref{natMotors} may help reveal the extent to which
the parallel chip-firing game can simulate additional aspects of the dollar
game and other general chip-firing games.

Despite the expressiveness we get due to motors, the nonclumpiness of firing
patterns tells us that the parallel chip-firing game is ``easier'' than its
rules explicitly tell us it must be. In addition to results mentioned in
Section~\ref{corollaries}, Theorem~\ref{nct} is a step towards reducing the
parallel chip-firing game to one of interacting ``gliders''. For example,
consider the situation in Corollary~\ref{freeLunch}. Intuitively, we can think
of this corollary as stating that each firing of the motor creates a wave of
gliders that travels away from the motor. In fact, the corollary, together with
Theorem~\ref{nct}, implies that every periodic position on tree-like subgraphs
must be the result of such gliders, providing a new test that can diagnose some
positions that are never repeated. Every game on a simple cycle with period at
least 3 can be described by gliders~\cite{cycle}. (See Figure~\ref{cycleFig}.)
We believe that this approach could be used to analyze periodic behavior of
games on further classes of graphs, such as those in which each vertex is in at
most once cycle.

\begin{centering}
\begin{figure}[tbh]
  \subfloat{\includegraphics[width=\figWidthA]{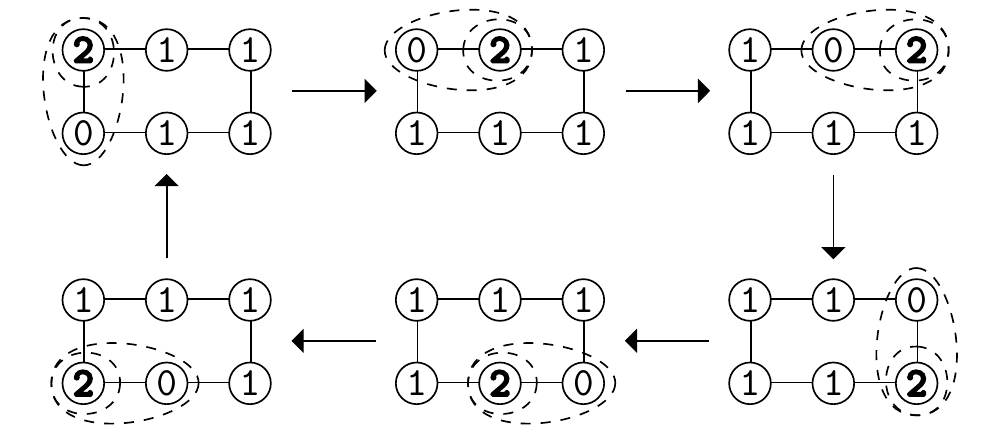}}
  \subfloat{\includegraphics[width=\figWidthB]{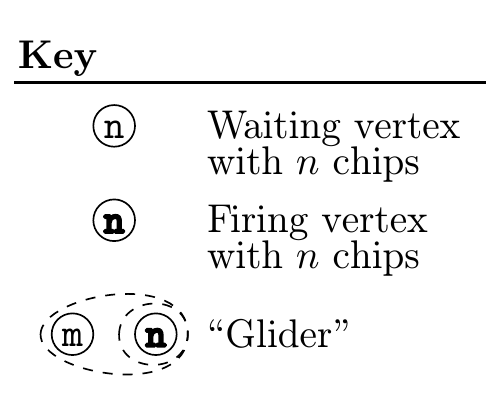}}
  \caption{A game on a 6-cycle in which a glider orbits once each period.}
  \label{cycleFig}
\end{figure}
\end{centering}

Nonclumpiness is essentially an unwritten rule of periods in the parallel
chip-firing game, which is unusual because no local property of the firing
mechanic disallows clumpiness. By contrast, in other graph automata that are
more restrictive than the parallel chip-firing game, such as \emph{source
  reversal}~\cite{sourceReversal} (essentially a parallel chip-firing game with
exactly one chip bound to each edge), nonclumpiness is obvious, even
locally. In the other direction, motors make it simple to show that certain
stronger restrictions do not apply to the parallel chip-firing game. For
example, a path where the leaves are motors can yield a game in which some
chips cannot be bound to a single edge, which is a property of source
reversal. We might ask which restrictions apply to which chip-firing-style
games. Is the parallel chip-firing game on undirected graphs the most general
game to which an analogue of Theorem~\ref{nct} applies?

We hope that the intuition and constructive powers of motors and the reduction
in the space of possible periodic games provided by nonclumpiness prove useful
in further research. \FloatBarrier

\appendix
\section*{The Bellman-Ford Algorithm} \label{bfAlg}
The following Python program below constructs $\G$ from Section~\ref{binSeq}
and shows it has no negative cycles. For a more detailed exposition of the
Bellman-Ford algorithm and a proof of its validity, see~\cite{bellmanford}.

\bigskip
\footnotesize
\verbatiminput{bellmanFord.py}
\normalsize

\section*{Acknowledgements}
We thank the MIT PRIMES program for supporting the research that brought us
together. We thank Tiankai~Liu for improving upon a previous proof of
Theorem~\ref{morale}. We would also like to thank \mbox{Anne Fey}, \mbox{Lionel
  Levine} and \mbox{Tanya Khovanova} for helpful discussions. Finally, we thank
the anonymous referees who helped greatly improve the readability of the
paper. \mbox{Yan X Zhang} was supported by an NSF Graduate Fellowship.

\bibliographystyle{amsmodded}
\bibliography{refs}
\end{document}